\address[rodolphe.richard@normalesup.org]{Rodolphe \textsc{RICHARD}}
\newif\ifXELATEX
\newtheorem{theorem}{Theorem}[section]
\theoremstyle{definition}
\newtheorem{example}[theorem]{Exemple}
\newtheorem{remark}[theorem]{Remarque}
\newtheorem{theo}{Théorème}
\newtheorem{coro}{Corollaire}
\newtheorem{prop}{Proposition}
\newtheorem{lemme}{Lemme}
\newcommand{\QQ}{{\mathbf{Q}}}
\newcommand{\eps}{\varepsilon}
\newcommand{\abs}[1]{{\left|{#1}\right|}}
\newcommand{\floor}[1]{{\left\lfloor{#1}\right\rfloor}}
\newcommand{\pgcd}[2]{\mathrm{pgcd}\left(#1,#2\right)}
\title[Des~$\pi$-exponentielles~I]{Des~$\pi$-exponentielles~I:\\
Vecteurs de Witt annulés par Frobénius\\ et Algorithme de (leur) rayon de convergence}
\author[Rodolphe Richard]{Rodolphe Richard}
\renewcommand{\theparagraph}{\textbf{\@arabic\c@section.\@arabic\c@paragraph}}
\begin{document}

\maketitle

\selectlanguage{french}
\begin{abstract}
 Nous commençons par un énoncé abstrait sur la structure des noyaux itérés de l’endomorphisme de Frobenius sur l’anneau des vecteurs de Witt à valeurs dans un anneau d’entiers d’une extension ultramétrique de~$\QQ_p$. C’est conséquence et reformulation de résultats de Pulita dans sa théorie des «~$\pi$-exponentielles~». Nous retraduisons ensuite cet énoncé en termes plus élémentaires, d’intégralité et de solubilité de séries exponentielles de polynômes. Nous montrons ensuite comment en déduire une formule explicite de leur rayon de convergence, et même de leur fonction rayon de convergence. On retrouve ainsi, en termes élémentaires, l’algorithme de Christol. Nous ajoutons ça et là quelques raffinements et observations.
\end{abstract}

\selectlanguage{english}
\begin{abstract} Our object is the theory of~``$\pi$-exponentials" Pulita developed in his thesis, generalising Dwork’s and Robba’s exponentials and extending Matsuda’s work:
\newline
We start with an abstract algebra statement about the structure of the kernel of iterations of the Frobenius endomorphism on the ring of Witt vectors \emph{with coordinates in the ring of integers of an ultrametric extension of~$\QQ_p$}. Provided sufficiently (ramified) roots of unity are available, it is, unexpectedly simply, a principal ideal with respect to an explicit generator essentially given by Pulita’s $\pi$-exponential. This result is a consequence and a reformulation of core facts of Pulita’s theory. It happened to be simpler to prove directly than reformulating Pulita's results.
\newline
Its translation in terms of series is very elementary, and gives a criterion for solvabilty and integrality for $p$-adic exponential series of polynomials. We explain how to deduce an explicit formula of their radius of convergence, and even the function radius of convergence. We recover this way, in elementary terms, with a new proof, and important simplifications, an algorithm of Christol based similarly on Pulita’s work. One concrete advantage is: one can easily prove rigorous complexity bounds about the implied algorithm from our explicit formula. We also add there and there refinements and observation, notably hinting some of the finer informations that can also given by the algorithm. 
\newline
One of the appendix produce a computation which gives finer estimates on the coefficients of these series. It should provide useful in proving complexity bounds for various computational use involving these series. It is not apparent yet in the present work, but should be in latter projected developments, the series under consideration are the base object for some exponential sums on finite fields via $p$-adic approach, namely via rigid cohomology with rank one coefficients.
\newline
Convergence radius and coefficients estimates are involved studying the efficiency of computational implementations of these objects. The understanding of convergence radius of $p$-adic differential equations is a subject undergoing active developments, and we here provide a fine theoretical and computational study of the simplest of cases.
\newline
This initiate a projected series of articles. We start here, with the case of the affine line as a base space, a Witt vectors paradigm. This provides an alternative purely algebraic approach of Pulita’s theory; the richness of Witt vectors theory allow suppleness and efficiency in working with~$\pi$-exponentials, which will prove efficient later in the series.
\end{abstract}
\selectlanguage{french}
\setcounter{secnumdepth}{5}
\setcounter{tocdepth}{1}

\begin{classification}
12H25; 13F35; 14G20
\end{classification}

\begin{keywords}
$\pi$-exponentials, $p$-adic differential equations, Kernel of Frobenius endomorphism of Witt vectors over a $p$-adic ring, radius of convergence function, algorithm.
\end{keywords}

\tableofcontents
\setcounter{section}{-1}
\section{Introduction thématique et Contexte historique} Certaines questions de rayon de convergence de série entières à coefficients $p$-adiques apparaissent manifestement en étudiant l’exponentielle de Dwork~$\exp(\pi T)$, et sa fonction de scindage~$\exp(\pi (T^p-T))$, ou encore l’exponentielle d’Artin-Hasse~$\exp(T+T^p/p+T^{p^2}/p^2+\ldots)$ (cf.~\cite[VII~§2]{Robert}). Nous considérons ici les exponentielles de polynômes nuls en l’origine. Ce qui revient essentiellement aux équations différentielles linéaire ordinaires homogènes du premier ordre à coefficient polynomial. C’est dans ce contexte qu’Andrea Pulita a su réorganiser des travaux antérieurs de Philippe~Robba, en introduisant notamment des méthodes à base de vecteurs de Witt, dans~\cite{Pulita} (cf.~\cite[I Résumé~§1.1, et II~§1.1]{PulitaThese}). Gilles Christol a traduit ces travaux en un algorithme de calcul de la fonction rayon de convergence pour ces équations différentielles (\cite{Christol}). Nous présentons ici, avec simplification, quelques points clés et quelques conséquences des travaux de Pulita. L’éclaircissement que nous apportons vient de lumière que nous a apporté la relecture de la théorie de Cartier~(\cite{Cartier,Cartier3} et~\cite[§19]{HazeLectures}, et des simplifications rendues possibles par la riche  théorie des vecteurs de Witt (\cite{Haze}). Nous aboutissons à une formule explicite et directe~\eqref{formulerayon} de la fonction~$\mathrm{RoC}$ de~\cite{Christol}. La section
~§2 de~\cite{Christol} n’est pas non plus utilisée; nous la remplaçons par le Théorème~\ref{thm1} que nous démontrons directement, et un résultat datant au plus tard de~\cite{DworkRobbaEffective} (ou par l’annexe~\ref{annexe}).

Notre approche est originale, en ce sens au moins qu’elle repose entièrement sur la théorie des vecteurs de Witt, et qu’étonnamment, elle n’utilise plus de concepts provenant de la théorie des équations différentielles $p$-adiques. Cette approche se révèle remarquablement puissante, et d'autant plus à mesure que l’on  en appelle aux nombreux outils de la riche théorie des vecteurs de Witt. 

Notre exposition repose toutefois sur les travaux de Pulita. Néanmoins nous présentons des démonstrations complètes, sauf rappels à la théorie classique des vecteurs de Witt. (la référence à~\cite[Lemme~1.5]{Matsuda} est rapidement démontrée.) Cet article peut servir comme nouvelle manière d’introduire la théorie de Pulita, voire d’alternative, par voie purement algébrique, aux démonstrations originales.

\subsection*{Plan et guide de lecture} Les énoncés principaux sont les Théorèmes~\ref{thm1},~\ref{thm2} et~\ref{thm3}. Ils sont énoncés de manière la plus élémentaire possible. Nous proposons également quelques généralisations et raffinements aux numéros~\ref{thm2gen}, \ref{thm1gen}, \ref{raffinements} et  à l’Annexe~\ref{annexe}. La mise en contexte, notamment les notations et les définitions sont introduites au fur et à mesure, ce qui est permis par la brièveté de l’article. L'article est organisé pour une lecture linéaire, les notions étant introduites progressivement; ou une lecture par section, de par les rappels aux définitions et notations utilisées. Les preuves sont essentiellement complètes, détaillées, et immédiates dans le contexte utilisé (notamment celui des vecteurs de Witt).
Nous évoquons quelques développements dans la section~\ref{developpements} et détaillons quelques exemples dans la l’annexe~\ref{exemples}. Les annexes fournissent des compléments. 
\paragraph{}
Nous espérons dans un travail ultérieur utiliser le point de vue élaboré ici, ce afin de réaliser l’étude~$p$-adique des sommes d’exponentielles, dans une généralité plus grande que d’habitude, englobant notamment les sommes de~\cite{Morofushi}, et éventuellement leur généralisation à plusieurs variables. Nous obtiendrons une détermination exacte des fonctions~$L$ (et en particulier de leur degré) par interpolation. Plus précisément, nous étendrons dans cette généralité les résultats de~\cite{Terasoma}, sans hypothèse de généricité. Notre méthode étant en outre explicite et algorithmique. Nous retrouvons en particulier la conjecture de~\cite{Loeser}, et montrons même comment s’en passer, la dépasser. Nous répondons à une question de~\cite{LeStum} et de~\cite{Robba}.
\paragraph{}
La section~\ref{sec1} se résume à énoncer le Théorème~\ref{thm1} sur lequel se base notre construction. Il est formulé abstraitement en termes de la théorie des vecteurs de Witt. Sa démonstration est indépendante du reste de l’article; nous la renvoyons à la section~\ref{sectionpreuve}. La section~\ref{section2} développe les conséquences du Théorème~\ref{thm1} en termes de rayon de convergence. La section~\ref{secalgo} explique comment en déduire une formule explicite, et un algorithme de calcul du rayon de convergence. La section~\ref{developpements} détaille la variation de l’algorithme qui permet de calculer la fonction rayon de convergence au sens de~\cite{Christol}. Cette section poursuit par quelques développements quant aux informations fournies par notre méthode, mais nous touchons là des questions qui seront traitées ailleurs. La section~\ref{exemples} conclut en détaillant quelques exemples d’application de l’algorithme qui peuvent être traités à la main. Dans l’annexe~\ref{annexe} nous ajoutons un exemple d’application de nos méthodes en revenant sur un résultat de~\cite{DworkRobbaEffective} auquel nous avons fait appel, et, dans le cadre auquel nous l’avons appliqué, nous le redémontrons et le précisons par des estimations fines de coefficients. La dernière annexe donne une réduction qui permet d'étendre les résultats de cet article dans le contexte plus général de~\cite{Pulita}.

\section{Noyaux de Frobenius itérés sur les entiers}\label{sec1}
Soit~$p$ un nombre premier. 
\paragraph{}
	Pour tout anneau~$R$, notons~$W(R)$ l’anneau des vecteurs de Witt~$p$-typiques sur~$R$ (\cite[§4 Définition~1]{BBKAC8}). Notons~$F$ et~$V$ son endomorphisme de Frobenius et son opérateur additif de décalage respectivement (\cite[§5 Proposition~3]{BBKAC8}). Pour tout entier~$d$ dans~$\mathbf{Z}_{\geq 1}$, nous considérons le noyau~${}_dW(R)$ de~$F^d$, et le conoyau~$W_d(R)$ de~$V^d$. Le premier est manifestement un idéal de~$W(R)$ et le second est un anneau quotient~(\cite[§6~(36-
37)]{BBKAC8}). Ce dernier étant aussi  connu sous le nom d’anneau de vecteurs de Witt «~tronqués~», ou «~de longueur finie~» (\cite[§2]{Haze}). D’après l’identité~$V^d(a)\times b=V^d(a\times F^d(b))$, de~\cite[§6~(37)]{BBKAC8}, la structure de~$W(R)$-module de l’idéal~${}_dW(R)$ passe au quotient et définit une structure de~${W}_d(R)$-module.

\paragraph{} Une \emph{extension ultramétrique}~$K/\QQ_p$ désigne une extension de corps à laquelle on prolonge la valeur absolue de~$\QQ_p$. Il ne sera pas nécessaire ici de supposer~$K$ complet.\footnote{Peut-être suffit-il même d’une extension ultramétrique de~$\QQ$, relativement à une norme~$p$-adique.} L’anneau des \emph{entiers} désigne la boule unité fermée.
\begin{theo}\label{thm1} Soit~$R$ l’anneau des entiers d’une extension ultramétrique~$K$ de~$\QQ_p$. Si~$R$ contient une racine de l’unité~$\zeta$ d’ordre~$p^{d+1}$, alors~${}_dW(R)$ est un ${W}_d(R)$-module libre de rang~$1$, et donc un idéal principal de~${W}(R)$, avec comme générateur le vecteur de Witt~$w_d$ de composantes fantômes (\cite[§4 Définition~1]{BBKAC8}, ou~\ref{fantome} \emph{infra}) 
\begin{equation}\label{fantomepulita}(\zeta-1,\zeta^p-1,\ldots,\zeta^{p^i}-1,\ldots ).\end{equation}
\end{theo}
\noindent Ce théorème sera démontré dans la section~\ref{sectionpreuve}, et généralisé au numéro~\ref{thm1gen}.

\paragraph{}On remarquera que la suite des composantes fantômes stationne à~$0$ à partir du terme du $(d+1)$-ième terme.\footnote{Le théorème vaut aussi dans la généralité de~\cite[Définition~2.2, Remarque~2.3, p.510]{Pulita}: remplaçant~$\zeta-1$ par un point de torsion d’ordre~$p^{d+1}$ d’un groupe de Lubin-Tate isomorphe au groupe trivial, et la suite~\eqref{fantomepulita} par~\cite[Définition~2.2, p.510]{Pulita}, la suite récurrente construite par multiplication par~$p$. À~$d$ fixé, il est même possible de considérer un groupe de Lubin-Tate non trivial, mais satisfaisant une condition de la forme~\cite[Théorème~2.1~p.512, Théorème~2.5~4. p.518]{Pulita}} Une autre représentation de~$w_d$ est~\eqref{exppulita} \emph{infra}.

%

\section{Critère de solubilité-intégralité}\label{section2} Soit~$R$ l’anneau des entiers d’une extension ultramétrique~$K$ de~$\QQ_p$. Rappelons que l’exponentielle d’Artin-Hasse permet de construire un homomorphisme injectif (\cite[Définition~2.3, Remarque~2.4, p.511-512]{Pulita}, \cite[exercices~43.~a),b), 58.~d)]{BBKAC8})\footnote{Ces deux références diffèrent d’un signe dans le choix de l’indéterminée.}
\begin{equation}\label{AH}	AH:(W(K),+) \to \Lambda(K):=(1+TK[[T]],\times).\end{equation}
dont l’image est formée des séries dont le logarithme est une série~\emph{$p$-typique}. On entend par là une série~$a_0T+a_1T^p+\ldots+a_dT^{p^d}+\ldots$ de~$K[[T]]$ dont les seuls monômes non nuls sont de degré une puissance de~$p$.

\paragraph{}\label{integralite} Rappelons qu’un vecteur de Witt~$v$ de~$W(K)$ est entier~(c.-à-d., est dans~$W(R)$) si et seulement si~$AH(v)$ est à coefficients entiers (dans~$\Lambda(R)$, à coefficients dans~$R$). Cela découle de~\cite[Exercice~58.~c)]{BBKAC8} vu que~$R$ est une~$\mathbf{Z}_{(p)}$-algèbre.

\paragraph{}\label{fantome} On peut utiliser comme définition que les \emph{composantes fantômes} du vecteur de Witt $p$-typique~$v$ se déduisent des coefficients~$a_i$ comme étant la suite des~$-p^i\cdot a_i$. (Pour leur convention de signe, voir \cite[exercices~58.~c)]{BBKAC8} et voir aussi~\cite[exercices~39.~d), 40.~h)]{BBKAC8}.)\footnote{Il semble que la convention de signe de~\cite{BBKAC8} présente l’avantage de permettre des formules uniformes (mais toutes présentant des signes~«~$-$~») entre le cas~$p$ pair, et le cas des~$p$ impairs.} 
\paragraph{}\label{fantomeFV} Les applications~$F$, et~$V$ sont données en termes des composantes fantômes~$(a_0,a_1,\ldots)$ par
\begin{equation}\label{fantomeFV}(a_0,a_1,\ldots)\stackrel{F}{\mapsto}(a_1,a_2,\ldots),\text{ et }(a_0,a_1,\ldots)\stackrel{V}{\mapsto} (0,pa_0,pa_1,\ldots)\text{ respectivement,} \end{equation}
et le produit des vecteurs de Witt devient le produit composante par composante. (ou~«~produit de~Hadamard~»)
\paragraph{}\label{numeromatsuda} Le vecteur de Witt~$w_d$ du Théorème~\ref{thm1} correspond à la série
\begin{equation}\label{exppulita}AH(w_d)=\exp\left( -(\zeta-1)\cdot T-(\zeta^p-1)\cdot\frac{T^p}{p}-\ldots-(\zeta^{p^d}-1)\cdot\frac{T^{p^d}}{p^d}\right).\end{equation}
En termes de l’exponentielle de Artin-Hasse classique
$$e_{AH}(T):=AH(1)=\exp\left( -T-{T^p}/{p}-\ldots-{T^{p^d}}/{p^d}\right),$$
on récrit, suivant~\cite[Lemme~1.5]{Matsuda},
\begin{equation}\label{FormuleMatsuda}	AH(w_d)=e_{AH}(\zeta T)\cdot e_{AH}(-T).\end{equation}
Comme~$e_{AH}(T)$ est à coefficients dans~$\mathbf{Z}_{(p)}$ (voir par ex.~\cite[VII~§2.2]{Robert}), il suit que~$AH(w_d)$ est à coefficients dans~$\mathbf{Z}_{(p)}[\zeta]$.  

 La construction:~$\exp(P(T))\mapsto\exp(\widetilde{P}(T))$, de l’énoncé qui vient, correspond au produit de Hadamard par l’inverse de~$AH(w_d)$.

\paragraph{}
En termes de séries, le Théorème~\ref{thm1} a la conséquence suivante.
\begin{theo} \label{thm2}	Soit~$R$ l’anneau des entiers d’une extension ultramétrique~$K$ de~$\QQ_p$. Introduisons~$\zeta$ comme racine de l’unité d’ordre~$p^{d+1}$ dans une extension finie de~$K$.

Soit~$P(T)=a_0T+a_1T^p+\ldots+a_dT^{p^d}$ un polynôme~$p$-typique de degré au plus~$p^d$ à coefficients dans~$K$.

Alors nous avons équivalence entre les propriétés suivantes:
\begin{enumerate}	
\item Le rayon de convergence de la série~$\exp(P(T))$ est au moins~$1$;
\item \label{condition2}  La série~$\exp(P(T))$ appartient à~$\Lambda(R)$ (tous ses coefficients sont entiers);
\item Si l’on pose~$\tilde{P}(T)=\frac{a_0}{\zeta-1} T+\frac{a_1}{\zeta^p-1}T^p+\ldots+\frac{a_d}{\zeta^{p^d}-1}a_dT^{p^d}$, alors la série~$\exp(\tilde{P}(T))$ a ses premiers coefficients dans~$R[\zeta]$, jusqu’au degré~$p^d$ inclus. 
\end{enumerate}
\end{theo}
Il est instructif d’expliciter le cas~$d=0$. On retrouve bien que l’exponentielle a pour rayon de convergence~$\abs{\zeta-1}$. D’autres exemples sont détaillés au numéro~\ref{exemples}. 
 L’équivalence entre les deux premiers points est déjà connue depuis au plus tard~\cite[Théorème~4.3, pour~$n=1$, avec~$u_1=\exp(P(T))$]{DworkRobbaEffective}\footnote{La condition d’inversibilité du wronskien revient à l’inversibilité des valeurs prises par~$e(T):=\exp(P(T))$ dans son disque de convergence ouvert. Si, par l’absurde,~$e$ s’annule en un  point~$t$ de ce disque, alors nous disposons de deux solutions locales non identiquements nulles à l’équation différentielle~\eqref{equadiff} ordinaire d’ordre~$1$: la solution~$e(T)$ et la solution qui vaut~$1$ en~$t$. Mais la seconde ne peut être proportionnelle à la première. Or ce doit être le cas pour l’ordre~$1$.}. Nous la redémontrerons dans l'annexe~\ref{annexe}. L’utilité de ce théorème est manifeste si l’on sait que la première propriété est une question classique en théorie des équations différentielles $p$-adiques (la solubilité de~\eqref{equadiff} \emph{infra}),  et si l’on remarque que le second critère, connu, nécessite d’aborder une infinité de coefficients, tandis que le troisième se vérifie par un algorithme immédiat. En outre, ce théorème s’énonce élémentairement, sans faire appel aux vecteurs de Witt sous-jacents. L’algorithme évoqué ne nécessite pas de calculer des composantes de vecteurs de Witt.
\begin{proof}[Preuve du Théorème~\ref{thm2}]
Le polynôme~$P(T)$ provient, via l’application~$\log\circ AH$, d’un élément~$v$ de~$W(K)$. La série~$\exp(P(T))$ s’écrit alors comme l’élément~$AH(v)$ de~$\Lambda(K)$. Or~$P(T)$ est nul au-delà du degré~$p^d$. D’après~\eqref{fantomeFV}, cela équivaut à~$F^{d+1}(v)=0$. Ainsi~$v$ est dans~${}_dW(K)$.

La condition~$2$ revient alors à affirmer que~$AH(v)$ appartient en fait à~$\Lambda(R)$. C’est-à-dire (cf.~\ref{integralite}) que~$v$ est dans~$W(R)$. Comme~$\zeta$ est entier sur~$R$, et~$R$ intégralement clos dans~$K=R[1/p]$, nous avons~$R[\zeta]\cap K=R$, d’où~$\Lambda(R[\zeta])\cap \Lambda(K)=\Lambda(R).$ La condition~\ref{condition2} équivaut donc à ce que l’élément~$AH(v)$ appartienne à~$\Lambda(R[\zeta])$. Ou bien à ce que~$v$ soit dans dans~${}_{d+1}W(R[\zeta])$.

Appliquons le Théorème~\ref{thm1} à~$R[\zeta]$ et à~$\zeta$. Nous obtenons un isomorphisme
$$W_{d+1}(R[\zeta])\xrightarrow{x\mapsto x\cdot w_d} {}_{d+1}W(R[\zeta])$$
qui s’étend par la même formule en un isomorphisme~$W_{d+1}(K[\zeta])\to {}_{d+1}W(K[\zeta])$. Nous pouvons écrire~$v=x\cdot w_d$. En termes de composantes fantômes, on vérifie que la série~$p$-typique associé à~$x$, au sens du début de~§\ref{section2}, n’est autre que le polynôme~$\tilde{P}(T)$.

Le Théorème~\ref{thm1} nous donne l’équivalence: 
$$v\in {}_{d+1}W(R[\zeta]) \Leftrightarrow x\in W_{d+1}(R[\zeta]).$$ Il reste à décider si~$x$, qui est dans~$W_{d+1}(K[\zeta])$, appartient à~$W_{d+1}(R[\zeta])$. On utilise le lemme suivant pour~$R[\zeta]$ et~$K(\zeta)$.
\end{proof}
\begin{lemme}[cf.~{\cite[§A.7, p.164]{Kay}}]\label{Lemmecritere} Pour tout~$x$ dans~$W(K)$, on a équivalence entre:
\begin{enumerate}
\item La classe~$x+V^dW(K)$ de~$x$ dans~$W_d(K)$ est entière. (appartient à~$W_d(R)$)
\item La série~$AH(x)$ a des coefficients entiers jusqu’au degré~$p^d$ inclus.
\end{enumerate}
\end{lemme}
Ce lemme semble essentiellement connu. Il découlera aisément, au numéro~\ref{universels} qui suit, de la théorie de l’anneau des vecteurs de Witt « universels », ou « généralisés » (\cite[§26]{Mumford-Bergman}), et de ses troncations. (Voir \cite[§§14.15--14.25]{Haze},\cite[§1, §3]{Cartier3}) L’auteur remercie L.~Hesselbot pour lui avoir, diligemment, pointé la référence~\cite[§A.7, p.164]{Kay}\nocite{KayErr}. Voir aussi~\cite[§1]{Larspublie} qui détaille ce dont nous aurons besoin; notamment~\cite[1.1, 1.14, 1.15, 1.16]{Larspublie}. Des considérations analogues semblent évoquées dans~\cite[§4.1]{Manin} et semblent paraître dans~\cite[§2, Lemmes 2.1, 2.2]{Katz}. Les considérations qui suivent~(\ref{universels} et~\ref{remarquable}) vont nous permettre de généraliser les Théorèmes~\ref{thm1} et~\ref{thm2} dans le contexte «non nécessairement~$p$-typique~», aux numéros qui suivront. 

Nous suivons~\cite[1]{Larspublie}.
\paragraph{\emph{Vecteurs de Witt universels tronqués}}\label{universels}
\newcommand{\W}{\mathbb{W}}
\newcommand{\WD}{\mathbb{W}_{\{1;\ldots;D\}}}
Il existe un foncteur qui, pour tout anneau~$R$, définit un anneau~$\W(R)$, naturellement isomorphe~$\Lambda(R)$, et dont~$W(R)$ est naturellement un quotient. Pour tout entier~$D$, on considère le quotient
\begin{equation}\label{quotientmoddegre}\WD(R)\simeq\left.\Lambda(R) \middle/\left(1 + T^{D+1}R[[T]]\right)\right. .\end{equation}
Lorsque~$R$ est une~$\mathbf{Z}_{(p)}$-algèbre, l’application~$AH$ fait de~$W(R)$ un facteur direct de~$\W(R)$. En composant avec~$V_n:e(T)\mapsto e(T^n)$ on obtient même une factorisation
\begin{equation}\label{factorisationU}\W(R)=\prod_{p\nmid n} V_n W(R).\end{equation}

Cette factorisation~\eqref{factorisationU} induit au quotient~\eqref{quotientmoddegre} une factorisation
\begin{equation}\label{factorisationtronquee}\prod_{p\nmid n} W_{d_n}(K)\simeq \left.\Lambda(K) \middle/\left(1 + T^{D+1}K[[T]]\right)\right. ,\end{equation}
où~$d_n$ est l’entier maximal tel que~$np^{d_n}\leq D$, et se calcule comme l’arrondi entier par défaut
\begin{equation}\label{arrondi} d_n=\floor{\log_{p}{\frac{D}{n}}}.\end{equation}

Le fait important étant la fonctorialité par rapport la~$\mathbf{Z}_{(p)}$-algèbre~$R$. Si~$R$ est une~$\mathbf{Z}_{(p)}$-algèbre sans torsion,
posons~$K=R[1/p]$. Alors un élément~$x$ de~$W_{d_1}(K)$ provient de~$W_{d_1}(R)$ si et seulement son image dans~$\WD(K)$ provient de~$\WD(R)$. Autrement dit ci la série tronquée qui lui correspond pa~\eqref{quotientmoddegre} a ses~$D$ coefficients dans~$R$.

Le Lemme~\ref{Lemmecritere} s’en déduit en choisissant~$D=p^d$.

\paragraph{}\label{remarquable}Dans~$\log(\Lambda(K))$, la factorisation~\eqref{factorisationU} se déduit de la réécriture d’une série 
\begin{equation}\label{factorisation} P(T)= \sum_i a_iT^i = \sum_{p\nmid n} P_n(T^n),\text{ où }P_n(T^n)=\sum_d a_{np^d}T^{np^d},\end{equation}
en termes de séries $p$-typiques~$P_n(T)$. La factorisation correspond à
$$\exp(P(T)\mapsto(\exp(P_n(T^n)))_{n\in\left\{k\geq 1\,\middle|\,p\nmid k\right\}}.$$
 En particulier,~$\exp(P(T))$ est à coefficients dans~$R$ (resp. jusqu’au degré~$D$) si et seulement si il en est de même de chacun de ses «~facteurs~$p$-typiques~» $\exp(P_n(T^n))$.

\paragraph{\textit{Généralisation au cas non nécessairement~$p$-typique}}

%
En utilisant ainsi les vecteurs de Witt universels, on peut montrer la généralisation suivante du Théorème~\ref{thm2} où l’on ne suppose plus que soit~$p$-typique le polynôme~$P$. On choisit~$P(T)=\sum_{i=1}^D a_1 T^i$, d’un certain degré~$D\geq 1$ et toujours de terme constant nul.

Au prix d’une formule un peu plus alambiquée pour définir~$\widetilde{P}(T)$. Utilisons~\eqref{arrondi} pour définir~$d=d_D$. Soit~$\zeta$ comme dans le Théorème~\ref{thm1}, pour ce~$d$. Notons~$\zeta_d=\zeta$, et définissons~$\zeta_{d-i}=\zeta^{p^i}$.  Pour tout degré~$1\leq n \leq D$, reprenons la définition~\eqref{arrondi}. Nous pouvons poser
\begin{equation}\label{Puniv}\widetilde{P}(T)=\frac{a_1}{\zeta_{d_1}-1} T+\frac{a_2}{\zeta_{d_2}-1}T^2+\ldots+\frac{a_D}{\zeta_{d_D}-1}T^{D}.\end{equation}

\paragraph{Exemple} Si~$D=17$ et~$p=2$, les~$d_i$ sont~$4$, $3$, $2$, $2$, $1$, $1$, $1$, $1$ et $0$ pour~$9\leq i\leq17$, les~$\zeta_{d_i}$ sont d’ordre~$32,16,8,8,4,4,4,4,2,2,\ldots$, les premiers~$\zeta_i$ peuvent être choisis
$$-1,~i,~\frac{\sqrt{2}+\sqrt{2}i}{2},\frac{\sqrt{2+\sqrt{2}}+i\sqrt{2-\sqrt{2}}}{2},\frac{\sqrt{2+\sqrt{2+\sqrt{2}}}+i\sqrt{2-\sqrt{2+\sqrt{2}}}}{2}=e^\frac{\pi*i}{16}.$$
L’analogue de~\eqref{exppulita} sera~\eqref{exppulitaglobal}, page~\pageref{exppulitaglobal}.
\paragraph{}\label{thm2gen} La variante suivante du Théorème~\ref{thm2} se déduit par un simple jeu de traduction autour du numéro~\ref{remarquable}.
\begin{coro}\label{thm2bis}	Le Théorème~\ref{thm2} vaut pour un polynôme~$P(T)=\sum_{i=1}^D a_1 T^i$ qui n’est plus supposé~$p$-typique: en posant~$D=np^d$ avec~$\pgcd{n}{p}=1$; en choisissant~$\zeta$ une racine de l’unité d’ordre~$p^{d+1}$; en définissant~$\widetilde{P}(T)$ par~\eqref{Puniv}; et, dans la dernière condition, en requérant l’intégralité de~$\exp(\widetilde{P}(T))$ jusqu’au degré~$D$ inclus.
\end{coro}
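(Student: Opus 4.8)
\noindent\emph{Proof proposal.} The plan is to reduce the non-$p$-typical statement to Theorem~\ref{thm2}, one $p$-typical piece at a time, using the splitting prepared in~\ref{remarquable}. Write $P(T)=\sum_{p\nmid n}P_n(T^n)$ with $P_n(T)=\sum_{0\le k\le d_n}a_{np^k}T^{p^k}$ the $p$-typical factor of level $n$ and $d_n$ as in~\eqref{arrondi}. The equivalence (1)$\Leftrightarrow$(2) requires nothing new: the result of~\cite{DworkRobbaEffective} reproved in Appendix~\ref{annexe} concerns the first-order equation attached to an arbitrary polynomial $P$, not just $p$-typical ones. So the entire task is (2)$\Leftrightarrow$(3), and that is pure translation through~\ref{remarquable}.

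The first step is the identity $\widetilde P(T)=\sum_{p\nmid n}\widetilde P_n(T^n)$, where $\widetilde P_n=\sum_{k}\frac{a_{np^k}}{\zeta_{d_n}^{\,p^k}-1}T^{p^k}$ is the transform attached by Theorem~\ref{thm2} to $P_n$ and to the root of unity $\zeta_{d_n}$. This is exactly what the exponents in~\eqref{Puniv} are designed for: the monomial $T^{np^k}$ of $\widetilde P$ carries $a_{np^k}/(\zeta_{d_{np^k}}-1)$, and since~\eqref{arrondi} gives $d_{np^k}=d_n-k$, the ladder $\zeta_{j}=\zeta^{p^{\,d-j}}$ yields $\zeta_{d_{np^k}}=\zeta_{d_n}^{\,p^k}$, so this is precisely the coefficient of $T^{p^k}$ in $\widetilde P_n$. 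Checking $d_{np^k}=d_n-k$ and the resulting compatibility of the cyclotomic ladder is the routine but indispensable bookkeeping underlying the corollary.

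With this identity I would not quote Theorem~\ref{thm2} factorwise as a black box, but rather run its proof inside the truncated universal Witt ring~\eqref{factorisationtronquee}. Both $\exp(P)$ and $\exp(\widetilde P)$, as elements of $\Lambda(K)$ truncated at the common degree $D$, split through~\eqref{factorisationtronquee} into level-$n$ pieces in the truncated rings $W_{d_n}(K)$; Theorem~\ref{thm1} applied at each level — the isomorphism given by multiplication by $w_{d_n}$, built from $\zeta_{d_n}$ — relates the piece coming from $P_n$ to the piece coming from $\widetilde P_n$ and transports integrality, exactly as in the proof of Theorem~\ref{thm2}. Since the coefficients of $\exp(\widetilde P_n)$ lie in $K(\zeta_{d_n})$, the intersection $R[\zeta]\cap K(\zeta_{d_n})=R[\zeta_{d_n}]$ (compatibility of the unit balls with subfields, as used for $R[\zeta]\cap K=R$ in Theorem~\ref{thm2}) lets me read the required integrality in $R[\zeta]$ as integrality in the coefficient ring $R[\zeta_{d_n}]$ of the level-$n$ piece. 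Thus (2) and (3) decompose into the same family of per-level integrality conditions, and the equivalence follows term by term.

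The point needing care — and the reason I would work inside~\eqref{factorisationtronquee} rather than invoke Theorem~\ref{thm2} per factor — is the matching of truncation degrees. Theorem~\ref{thm2} tests $\exp(\widetilde P_n)$ only up to degree $p^{d_n}$, whereas truncating $\exp(\widetilde P)$ at degree $D$ tests the level-$n$ piece up to degree $\lfloor D/n\rfloor$, which merely lies in the range $[p^{d_n},p^{d_n+1})$ and can exceed $p^{d_n}$. These tests agree because integrality at level $n$ is a property of the finitely many Witt coordinates recorded by $W_{d_n}$ and is detected identically by every truncation degree in that range, the next genuine constraint appearing only at $p^{d_n+1}$; this is precisely the functoriality statement of~\ref{universels}, so in the universal-Witt formulation the degree-matching is automatic and needs no separate propagation argument. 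I expect this bookkeeping, together with the cyclotomic ladder identity of the second step, to be the only real content; everything else is the translation already set up in~\ref{remarquable}.
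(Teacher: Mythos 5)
Your proposal is correct and follows essentially the same route as the paper's own proof: decompose $P$ into its $p$-typical components via~\eqref{factorisation} and~\ref{remarquable}, apply the $p$-typical statement at each level~$n$ with $d=d_n$ and root $\zeta_{d_n}$ (your ``inlined'' run of Theorem~\ref{thm2}'s proof through Theorem~\ref{thm1} and Lemma~\ref{Lemmecritere} inside~\eqref{factorisationtronquee} is exactly what citing Theorem~\ref{thm2} factorwise amounts to), and recombine, matching truncation degrees through the functoriality of~\ref{universels}. The two points you single out as needing care --- the ladder identities $d_{np^k}=d_n-k$ and $\zeta_{d_{np^k}}=\zeta_{d_n}^{p^k}$ behind $\widetilde{P}=\sum_{p\nmid n}\widetilde{P_n}(T^n)$, and the degree matching on $[p^{d_n},p^{d_n+1})$ --- are precisely the steps the paper compresses into ``par construction'' and ``vu que c'est une série en~$T^n$''.
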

\begin{proof}Utilisons la décomposition~\eqref{factorisation}. D’après la factorisation~\eqref{factorisationU} et la remarque~\eqref{remarquable}, l’intégralité de~$\exp(P(T))$ revient à l’intégralité simultanée de chacune des~$\exp(P_n(T^n))$. On utilise alors le théorème~$2$ pour chacun des~$P_n(T)$ avec~$d=d_n$. Celà revient à l’intégralité des~$\exp(\widetilde{P_n}(T))$ jusqu’au degré respectif~$p^d_n$. Où encore à l’intégralité de~$\exp(\widetilde{P_n}(T^n))$ jusqu’au degré~$np^{d_n}$. Où encore à l’intégralité de~$\exp(\widetilde{P_n}(T^n))$ jusqu’au degré~$D$, vu que c’est une série en~$T^n$.

Par construction, le polynôme~$\widetilde{P}(T)$ de~\eqref{Puniv} s’écrit~$
\sum_{p\nmid n}\widetilde{P_n}(T)$. En vertu du numéro~\ref{remarquable}, l’intégralité simultanée des~$\exp(\widetilde{P_n}(T^n))$ jusqu’au degré~$D$ revient à l’intégralité de~$\widetilde{P}(T)$ jusqu’au degré~$D$.
\end{proof}
\paragraph{}\label{thm1gen} De la même manière, nous avons l’analogue suivant du Théorème~\ref{thm1}.
\begin{coro} Sous les hypothèse du Théorème~$1$ concernant~$R$, les exponentielles de polynômes de degré au plus~$D$ qui sont à coefficients entiers décrivent l’idéal de~$\Lambda(R)$ engendré par
\begin{equation}\label{exppulitaglobal}\exp\left((\zeta_{d_1}-1)X+(\zeta_{d_2}-1)\frac{X^2}{2}+\ldots+(\zeta_{d_D}-1)\frac{X^D}{D}\right).\end{equation}
Cet idéal est un module libre de rang un sur le quotient~\eqref{quotientmoddegre}.
\end{coro}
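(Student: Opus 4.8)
This corollary is the non-necessarily-$p$-typical counterpart of Theorem~\ref{thm1}, and the plan is to obtain it exactly as Corollary~\ref{thm2bis} was obtained from Theorem~\ref{thm2}: reduce to the $p$-typical case factor by factor through the decomposition~\eqref{factorisation}, apply Theorem~\ref{thm1} to each factor, and reassemble by the truncated factorisation~\eqref{factorisationtronquee}. First I would fix the framing. Writing $\Lambda(R)=\mathbb{W}(R)$ as the ring of big Witt vectors, one recovers the ghost components $(g_m)_{m\ge1}$ of $f\in\Lambda(R)$ from $T\frac{d}{dT}\log f=\sum_m g_m T^m$ (up to the sign convention of~\ref{fantome}); thus $f$ is the exponential of a polynomial of degree at most $D$ precisely when its ghost components vanish in degrees $>D$. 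Since the big Witt multiplication is the Hadamard product on ghost components, the set $\mathcal{I}$ of integral exponentials of polynomials of degree at most $D$ is immediately seen to be an ideal of $\Lambda(R)$ annihilated by $1+T^{D+1}R[[T]]$, hence a module over the quotient $\WD(R)$ of~\eqref{quotientmoddegre}.

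Next I would treat each $p$-typical factor. By~\ref{remarquable}, for $P=\sum_{p\nmid n}P_n(T^n)$ as in~\eqref{factorisation}, the exponential $\exp(P(T))$ lies in $\Lambda(R)$ if and only if every $\exp(P_n(T^n))$ does. For $n$ prime to $p$ the series $P_n$ is $p$-typical of degree at most $p^{d_n}$, with $d_n$ as in~\eqref{arrondi} (indeed $np^{d_n}\le D<np^{d_n+1}$), and $\zeta_{d_n}=\zeta^{p^{d-d_n}}$ has order $p^{d_n+1}$. Theorem~\ref{thm1} then applies with $d=d_n$: the integral $p$-typical exponentials in the variable $T^n$ of degree at most $p^{d_n}$ form a free $W_{d_n}(R)$-module of rank one, generated by $V_n\bigl(AH(w_{d_n})\bigr)$. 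Taking the product over all $n$ prime to $p$ and invoking~\eqref{factorisationtronquee}, which identifies $\prod_{p\nmid n}W_{d_n}(R)$ with $\WD(R)$, I would conclude that $\mathcal{I}$ is a free $\WD(R)$-module of rank one, generated by $g:=\prod_{p\nmid n}V_n\bigl(AH(w_{d_n})\bigr)$.

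It then remains to recognise $g$ as the series~\eqref{exppulitaglobal}, and here I would again compute ghost components. By~\eqref{exppulita} the $p$-typical ghost components of $w_{d_n}$ are the $\zeta_{d_n}^{p^i}-1$ sitting in degrees $p^i$ for $0\le i\le d_n$, and $V_n$ carries the degree-$p^i$ ghost to the degree-$m$ ghost, with $m=np^i$, picking up a factor $n$. Using $d_m=d_n-i$ and $\zeta_{d_m}=\zeta_{d_n}^{p^i}$ (both immediate from~\eqref{arrondi} and the definition $\zeta_{d-j}=\zeta^{p^j}$), the ghost of $g$ in degree $m$ is $n(\zeta_{d_m}-1)$, whereas a direct computation gives $\pm(\zeta_{d_m}-1)$ for~\eqref{exppulitaglobal}. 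The two generators therefore differ, in each degree $m=np^i$, by the unit factor $\pm n$ of $R$ (here $n$ is prime to $p$), that is, by multiplication by a unit of $\WD(R)$ acting on the $n$-th factor through the integer $n$ (invertible since its residue is nonzero). As replacing a generator by a unit multiple does not change the generated module, the series~\eqref{exppulitaglobal} is itself a generator, and $\mathcal{I}$ is the principal ideal it generates.

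The main obstacle is precisely this last identification: keeping the index bookkeeping straight and reconciling the normalisations, since passing from the local generators $AH(w_{d_n})$ to the single global generator~\eqref{exppulitaglobal} introduces both the inversion already visible at $D=1$ (where~\eqref{exppulitaglobal} equals $AH(w_0)^{-1}$) and the prime-to-$p$ unit factors $n$ produced by $V_n$. By contrast, once the framing of the first paragraph is in place, the freeness and the rank-one statement are a formal consequence of Theorem~\ref{thm1} applied factorwise together with~\eqref{factorisationtronquee}.
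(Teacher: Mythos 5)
Your proof is correct and follows essentially the paper's own route: the paper states this corollary without proof, introducing it only by \emph{De la même manière} — that is, by the same reduction to $p$-typical components via \eqref{factorisationU} and \eqref{factorisationtronquee} that proves Corollary~\ref{thm2bis}, which is exactly your factorwise application of Theorem~\ref{thm1}. Your last step, identifying $\prod_{p\nmid n}V_n\bigl(AH(w_{d_n})\bigr)$ with \eqref{exppulitaglobal} up to the sign and the prime-to-$p$ factors $n$ (units of $\mathbf{Z}_{(p)}$, hence giving units of the quotient \eqref{quotientmoddegre}), correctly supplies bookkeeping that the paper leaves entirely implicit.
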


\paragraph{}\label{raffinements} Mentionnons, pour information, et sans preuve les raffinements suivants.
\begin{prop} Dans le Théorème~\ref{thm2} (resp. le Lemme~\ref{Lemmecritere}), il suffit de vérifier l’intégralité de~$\exp(\widetilde{P}(T))$ (resp. de~$AH(x)$) qu’aux degrés~$1,p,\ldots,p^d$.
\end{prop}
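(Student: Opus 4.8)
Le plan est de ramener les deux énoncés à une unique affirmation sur la série $AH(x)$, puis d'exploiter la factorisation d'Artin–Hasse de cette série en un produit, laquelle linéarise complètement le problème.

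Je commencerais par la réduction commune. Dans le Lemme~\ref{Lemmecritere} l'objet est directement $AH(x)$; dans le Théorème~\ref{thm2}, la série $\exp(\widetilde P(T))$ n'est autre que $AH(x)$ pour le vecteur de Witt $x$ construit dans la preuve (celui dont $\widetilde P$ est la série $p$-typique associée, au sens du début de~§\ref{section2}). Il suffit donc d'établir, pour l'anneau de base pertinent ($R$ pour le Lemme, $R[\zeta]$ pour le Théorème~\ref{thm2}), l'implication: \emph{si les coefficients de $AH(x)$ aux seuls degrés $1,p,\ldots,p^d$ sont entiers, alors tous les coefficients de $AH(x)$ jusqu'au degré $p^d$ le sont}. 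La réciproque étant triviale, ceci justifiera qu'il suffit de tester ces degrés.

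Le cœur de l'argument serait la factorisation suivante. Notons $(x_0,x_1,\ldots)$ les composantes de Witt de $x$. De la décomposition classique $x=\sum_{i\ge 0}V^i([x_i])$, de l'additivité de l'homomorphisme~\eqref{AH}, et de l'identité $AH(V^i([x_i]))=e_{AH}(x_iT^{p^i})$ (vérifiée sur les composantes fantômes, voir~\ref{fantome}), on tire
\begin{equation} AH(x)=\prod_{i\ge 0}e_{AH}\!\left(x_iT^{p^i}\right),\end{equation}
produit convergent $T$-adiquement puisque le facteur d'indice $i$ vaut $1+O(T^{p^i})$. La série $e_{AH}(S)=1-S+O(S^2)$, rappelée au numéro~\ref{numeromatsuda}, étant à coefficients dans~$\mathbf{Z}_{(p)}$, j'en lirais, en isolant le coefficient de $T^{p^i}$, un système triangulaire: dans le produit, le facteur d'indice $i$ ne contribue à ce coefficient que par son terme constant $1$ ou son terme linéaire $-x_iT^{p^i}$ (car $2p^i>p^i$), les facteurs d'indice $j>i$ seulement par $1$ (leur plus petit degré non nul valant $p^j>p^i$), et ceux d'indice $j<i$ par des polynômes à coefficients dans~$\mathbf{Z}_{(p)}$ en $x_0,\ldots,x_{i-1}$. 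Le coefficient de $T^{p^i}$ dans $AH(x)$ vaut donc $-x_i+Q_i(x_0,\ldots,x_{i-1})$, avec $Q_i\in\mathbf{Z}_{(p)}[x_0,\ldots,x_{i-1}]$.

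Je conclurais par récurrence. Comme $\mathbf{Z}_{(p)}$ est contenu dans l'anneau de base et que le coefficient diagonal $-1$ du système est inversible, l'intégralité des coefficients aux degrés $1,p,\ldots,p^d$ force de proche en proche $x_0,\ldots,x_d$ à être entiers. Le vecteur tronqué $\bar x=(x_0,\ldots,x_d,0,\ldots)$ est alors entier, de sorte que $AH(\bar x)\in\Lambda(R)$ (resp.~$\Lambda(R[\zeta])$) d'après~\ref{integralite}; or la factorisation ci-dessus montre que $AH(x)$ et $AH(\bar x)$ ont même développement jusqu'au degré $p^{d+1}-1$ (leur premier facteur distinct étant celui d'indice $d+1$, de degré $p^{d+1}$), a fortiori jusqu'au degré $p^d$. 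Tous les coefficients de $AH(x)$ jusqu'au degré $p^d$ sont ainsi entiers, ce qui établirait la proposition. L'obstacle principal ne réside pas dans cette récurrence, immédiate grâce au coefficient dominant inversible, mais dans la mise en place soigneuse de la factorisation et dans le rappel de l'intégralité $e_{AH}\in\mathbf{Z}_{(p)}[[S]]$, tous deux relevant de la théorie classique des vecteurs de Witt déjà mobilisée au numéro~\ref{numeromatsuda}.
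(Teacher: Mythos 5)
The paper provides no proof to compare yours against: this proposition is explicitly introduced at numéro~\ref{raffinements} by \emph{Mentionnons, pour information, et sans preuve les raffinements suivants}. Your argument therefore fills a gap the paper deliberately leaves open, and it is correct. The product expansion $AH(x)=\prod_{i\geq 0}e_{AH}\left(x_iT^{p^i}\right)$ — obtained from $x=\sum_{i\geq0}V^i([x_i])$, additivity of~$AH$, and the ghost-component computation (legitimate here because $K$ has characteristic zero, so the ghost map is injective) — is the classical fact underlying~\ref{integralite} and the references cited after Lemme~\ref{Lemmecritere}; the point you extract from it, namely that the coefficient of $T^{p^i}$ equals $-x_i+Q_i(x_0,\ldots,x_{i-1})$ with $Q_i\in\mathbf{Z}_{(p)}[x_0,\ldots,x_{i-1}]$, hence a triangular system with unit diagonal over $\mathbf{Z}_{(p)}\subseteq R$, is exactly why integrality of the Witt coordinates can be read off the $p$-power-degree coefficients alone. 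Your final truncation step, comparing $AH(x)$ with $AH(\bar x)$ modulo $T^{p^{d+1}}$, then correctly converts coordinate integrality back into integrality of \emph{all} coefficients up to degree $p^d$, which is what both refined statements require. This is also in the spirit of what the paper would presumably have done via the truncated universal Witt vectors of~\ref{universels}: your proof is the $p$-typical instance of that machinery, made explicit and self-contained.

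One point to tighten: in your reduction for the Théorème~\ref{thm2} case, the $x$ appearing in the paper's proof is a class in the quotient $W_{d+1}(K[\zeta])$, whereas $AH$ is defined on $W(K[\zeta])$. You should apply your argument to the unique $\tilde v\in W(K[\zeta])$ with $AH(\tilde v)=\exp(\widetilde{P}(T))$, which exists because $\widetilde{P}$ is $p$-typical and $AH$ is injective with image the series of $p$-typical logarithm; its class modulo $V^{d+1}$ is the paper's $x$. This is harmless — your factorization and triangular system hold for an arbitrary element of $W(K[\zeta])$, and the coefficient of $T^{p^i}$ for $i\leq d$ depends only on $x_0,\ldots,x_i$ — but as written the phrase \emph{le vecteur de Witt $x$ construit dans la preuve} conflates the truncated vector with its lift.
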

Pour des polynômes non nécessairement $p$-typiques, mais lacunaires, l’analogue suivant peut être pertinent.
\begin{prop} Dans le Corollaire~\ref{thm2bis}, soit~$J_P$ le monoïde multiplicatif engendré par~$p$ et par les degrés des monômes de~$P$. Alors il suffit de tester l’intégralité de~$\exp(\widetilde{P}(T))$ aux degrés pris dans~$J_P\cap\{1;\ldots;D\}$.
\end{prop}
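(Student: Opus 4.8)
Le plan est de reprendre la stratégie de la preuve du Corollaire~\ref{thm2bis} pour se ramener aux facteurs~$p$-typiques, d'appliquer la proposition précédente à chacun d'eux, puis de \emph{désentrelacer} la série produit.

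Je commencerais par écrire, suivant~\eqref{factorisation}, $\widetilde{P}(T)=\sum_{p\nmid n}\widetilde{P_n}(T^n)$, de sorte que $\exp(\widetilde{P}(T))=\prod_{p\nmid n}\exp(\widetilde{P_n}(T^n))$ soit exactement la factorisation~\eqref{factorisationU} en facteurs~$p$-typiques (numéro~\ref{remarquable}); l'intégralité de~$\exp(\widetilde{P})$ jusqu'au degré~$D$ équivaut alors à celle de chacun de ces facteurs. Comme~$\widetilde{P}$ a même support que~$P$, seuls contribuent les~$n$ qui sont la partie première à~$p$ d'un degré de monôme de~$P$; pour un tel~$n$, le facteur~$\exp(\widetilde{P_n}(T^n))$ a pour valuation le plus petit élément~$e_n$ du support de~$P$ dont la partie première à~$p$ vaut~$n$.

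J'appliquerais ensuite la proposition précédente au facteur~$p$-typique~$\widetilde{P_n}$ pris en la variable~$U=T^n$: son intégralité jusqu'au degré~$D$ se ramène à celle aux~$U$-degrés~$1,p,\ldots,p^{d_n}$, c'est-à-dire aux~$T$-degrés~$n,np,\ldots,np^{d_n}$. Les degrés~$<e_n$ y portent un coefficient nul, donc sont superflus; les autres s'écrivent~$e\,p^{j}$ avec~$e$ dans le support de~$P$, et tombent donc dans~$J_P$. L'ensemble des degrés réellement utiles, réuni sur~$n$, est ainsi contenu dans~$J_P\cap\{1;\ldots;D\}$.

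La principale difficulté est que ce qui précède contrôle chaque \emph{facteur} par \emph{ses} propres coefficients, alors que l'énoncé teste la série \emph{produit}~$\exp(\widetilde{P})$. Pour désentrelacer, je raisonnerais par récurrence sur le degré~$m=1,\ldots,D$, en maintenant que tous les facteurs sont intégraux en chaque degré~$<m$. Le coefficient de~$T^m$ dans le produit est la somme~$\sum_{n}c_n(m)$ (sur les~$n$ premiers à~$p$ divisant~$m$, où~$c_n(m)$ est le coefficient de~$T^m$ du facteur d'indice~$n$) augmentée de termes croisés, produits de coefficients de degrés~$<m$ donc entiers par récurrence. Notant~$n_0$ la partie première à~$p$ de~$m$, le seul tel~$n$ pour lequel~$m/n$ soit une puissance de~$p$ est~$n_0$; pour les diviseurs propres, $m/n$ n'est pas une puissance de~$p$, et la forme localisée de la proposition précédente (le coefficient de~$U^r$ d'une série~$p$-typique est entier dès que le sont ceux des~$U^{p^s}$ avec~$p^s\le r$, ces~$U^{p^s}$ correspondant ici à des~$T$-degrés~$n\,p^s<m$) assure que~$c_n(m)$ est déjà entier. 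Reste le coefficient frais~$c_{n_0}(m)$: il est nul si le facteur d'indice~$n_0$ est trivial ou si~$m<e_{n_0}$, et sinon~$m$ est de la forme~$e_{n_0}p^{j'}\in J_P$, auquel cas l'hypothèse d'intégralité du produit au degré~$m$ le force à être entier. La récurrence s'étend donc à tous les degrés, $\exp(\widetilde{P})$ est intégrale jusqu'au degré~$D$, et~$\exp(P)$ l'est par le Corollaire~\ref{thm2bis}. La réciproque étant immédiate, ceci conclurait.
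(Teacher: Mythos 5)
The paper offers no proof of this statement: it is introduced at~\ref{raffinements} with the words \emph{«~Mentionnons, pour information, et sans preuve les raffinements suivants~»}. So there is no proof of record to compare yours against, and your argument has to stand on its own. Its architecture does stand. Decomposing $\exp(\widetilde{P}(T))=\prod_{p\nmid n}\exp(\widetilde{P_n}(T^n))$ as in~\eqref{factorisation} and in the paper's proof of Corollaire~\ref{thm2bis}, and then running an induction on the degree $m\le D$, is a correct way to disentangle the product: the cross terms and the contributions $c_n(m)$ of the factors indexed by \emph{proper} divisors $n$ of the prime-to-$p$ part $n_0$ of $m$ are controlled by degrees $<m$ (your observation that $p^s\le m/n$ forces $np^s<m$ because $m/n$ is not a $p$-power is exactly what makes the induction hypothesis usable), while the remaining fresh coefficient $c_{n_0}(m)$ either vanishes (trivial factor, or $m$ below the valuation $e_{n_0}$) or satisfies $m=e_{n_0}p^{j'}\in J_P$, so that the integrality hypothesis applies to it. The concluding appeal to~\ref{remarquable} and to Corollaire~\ref{thm2bis} is then correct, as is the trivial converse.

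The one step you present as a citation but which is not literally available is your \emph{forme localisée}. The first proposition of~\ref{raffinements}, as stated, concludes integrality of a $p$-typical series only \emph{up to degree} $p^d$ from integrality at the degrees $1,p,\ldots,p^d$; you need the coefficient at a degree $r$ with $p^d<r<p^{d+1}$, which is strictly more. The strengthening is true, but deserves its own short proof in the paper's framework: if the coefficients of $AH(x)$ at the $p$-power degrees $\le r$ are integral, then by the refined form of the Lemme~\ref{Lemmecritere} the Witt components $x_0,\ldots,x_d$ of $x$ are integral (with $d=\floor{\log_p r}$); writing $x=y+V^{d+1}z$ with $y=(x_0,\ldots,x_d,0,\ldots)\in W(R)$, one has $AH(x)=AH(y)\cdot AH(V^{d+1}z)$, and since $V$ acts on series by $T\mapsto T^{p}$, the factor $AH(V^{d+1}z)$ is a series in $T^{p^{d+1}}$; hence $AH(x)\equiv AH(y)\pmod{T^{p^{d+1}}}$, and every coefficient in degree $<p^{d+1}$, in particular in degree $r$, is integral. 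Once this auxiliary lemma is stated and proved, your induction closes and the argument is complete.
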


%

\section{Algorithme de calcul du rayon de convergence}\label{secalgo} Considérons une équation différentielle ordinaire homogène du premier ordre algébrique et sans pôles sur la droite affine sur~$K$. On entend par là
\begin{equation}\label{equadiff} y^\prime=L(T)\cdot y\end{equation}
pour un certain coefficient~$L(T)$ dans~$K[T]$. Soit~$P(T)$ la primitive de~$L(T)$ sans terme constant. Alors une solution formelle à l’origine est donnée par la série~$\exp(P(T))$. Les autres solutions formelles en sont les multiples scalaires. Nous souhaitons déterminer le rayon de convergence de~$\exp(P(T))$. Ce qui s’obtient comme conséquence immédiate du Théorème~\ref{thm2}. Nous utilisons la formule~\eqref{Puniv} concernant la construction de~$\tilde{P}(T)$.



\begin{theo}\label{thm3}Avec les notations ci-dessus, soit~$D$ le degré de~$P(T)$. Soit~$\tilde{e}(T)=1+\sum_{i=1}^{p^d}\tilde{a}_iT^i$ le polynôme déduit de la série~$\exp(\tilde{P}(T))$ en ne conservant que les termes de degré au plus~$D$ inclus.

Alors le rayon de convergence~$\rho$ de~$\exp(P(T))$ est
\begin{equation}\label{formuleintegraliterayon}\rho=\max\left\{\abs{\lambda}\in\mathbf{R}~\middle|~\max_{1\leq i\leq D} \abs{\tilde{a}_i}\cdot\abs{\lambda}^i\leq 1\right\}.\end{equation}
\end{theo}
Nous entendons par \emph{polygone de Newton dual} la fonction~$\log{\abs{\lambda}}\mapsto \log N(\abs{\lambda})$, où~$N(\abs{\lambda})$
est la norme de Gauß~$\max_{0\leq i\leq p^d} \abs{\tilde{a}_i}\abs{\lambda}^i$ (pour~$\tilde{a}_0=1$)  de~$\tilde{e}(T)$ au rayon~$\abs{\lambda}$. Elle a pour épigraphe le polygone convexe qui se déduit par transformation de Legendre du polygone de Newton de~$P(T)$ défini classiquement (\cite[§6.4]{Gouvea}). Nous pouvons reformuler en ces termes la caractérisation du rayon~$\rho$.
\begin{coro}  Comme précédemment, soit~$\rho$ le rayon de convergence d’une solution formelle non nulle à l’origine  de~\eqref{equadiff}. Ce rayon correspond à la fin de la première pente, nulle, du polygone de Newton dual de~$\tilde{e}(T)$. C’est également le plus petit rayon d’une racine de~$\tilde{e}(T)$; il est donné par la formule
\begin{equation}\label{formulerayon}-\log\rho=\max_{1\leq i\leq D}\frac{1}{i}\cdot{\log\abs{\widetilde{a_i}}}.\end{equation}
\end{coro}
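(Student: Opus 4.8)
The plan is to derive all three assertions directly from the explicit description of~$\rho$ furnished by Theorem~\ref{thm3}, using only the multiplicativity of the Gauss norm. First I would unwind the condition in~\eqref{formuleintegraliterayon}: the inequality $\max_{1\leq i\leq D}\abs{\tilde{a}_i}\cdot\abs{\lambda}^i\leq 1$ holds precisely when $\abs{\tilde{a}_i}\cdot\abs{\lambda}^i\leq 1$ for every index~$i$, i.e. when $\abs{\lambda}\leq\abs{\tilde{a}_i}^{-1/i}$ for each~$i$ with $\tilde{a}_i\neq 0$. Selecting the largest admissible~$\abs{\lambda}$ yields $\rho=\min_{1\leq i\leq D}\abs{\tilde{a}_i}^{-1/i}$, and applying $-\log$ converts this minimum into the maximum of~\eqref{formulerayon}. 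This settles the closed formula with no further work.

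Next I would pass to the Gauss norm $N(r)=\max_{0\leq i\leq D}\abs{\tilde{a}_i}\cdot r^{i}$ of~$\tilde{e}(T)$ at radius~$r$, normalised by $\tilde{a}_0=1$. Because the constant term is~$1$, one has $N(r)=1$ exactly when $\abs{\tilde{a}_i}\cdot r^{i}\leq 1$ for all $i\geq 1$, which by the previous step means precisely $r\leq\rho$; and $N(r)>1$ as soon as $r>\rho$. Writing $u=\log r$, the dual Newton polygon $u\mapsto\log N(e^{u})$ is therefore identically zero for $u\leq\log\rho$ and strictly increasing thereafter. Hence its first segment is the flat, zero slope segment, ending exactly at $u=\log\rho$, i.e. at radius~$\rho$; this is the first assertion of the corollary.

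For the identification of~$\rho$ with the smallest radius of a root I would invoke the multiplicativity of the Gauss norm. Factoring $\tilde{e}(T)=\prod_{j}\left(1-T/t_j\right)$ over an algebraic closure (legitimate since $\tilde{e}(0)=1$), each factor has Gauss norm $\max\left(1,\,r/\abs{t_j}\right)$ at radius~$r$, so $N(r)=\prod_{j}\max\left(1,\,r/\abs{t_j}\right)$. Consequently $N(r)=1$ if and only if $r\leq\abs{t_j}$ for every~$j$, that is $r\leq\min_{j}\abs{t_j}$. Comparing this with the equivalence $N(r)=1\Leftrightarrow r\leq\rho$ established above forces $\rho=\min_{j}\abs{t_j}$, the smallest absolute value attained by a root. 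Through the Legendre duality recalled just before the corollary, this is the same as saying that~$\rho$ marks the end of the first slope of the classical Newton polygon of~$\tilde{e}(T)$.

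I expect the only delicate point to be the bookkeeping of conventions rather than any analytic difficulty: one must keep straight valuations against absolute values and the orientation of the slopes, so that the leftmost (most steeply descending) segment of the classical Newton polygon---governed by the low degree coefficients, which the truncation to degree~$D$ leaves untouched---corresponds to the roots of largest valuation, hence of smallest absolute value. Once these sign conventions are pinned down, each step above is an immediate rewriting, the entire substance being carried by Theorem~\ref{thm3} and the multiplicativity of the Gauss norm.
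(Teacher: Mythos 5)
Your proof is correct, and for the formula~\eqref{formulerayon} itself it follows essentially the paper's own route: the paper's short proof (placed after the corollary) re-derives from Theorem~\ref{thm2} that~$\rho$ is the supremum of those~$\abs{\lambda}$ for which the Gauss norm of~$\tilde{e}(\lambda T)$ at unit radius is at most~$1$ --- which is exactly the content of formula~\eqref{formuleintegraliterayon} of Theorem~\ref{thm3} that you take as your starting point --- and converting that condition into~$\rho=\min_i\abs{\tilde{a}_i}^{-1/i}$ and taking logarithms is precisely the ``reformulation'' the paper announces. The genuine difference lies in the other two assertions of the corollary. The paper disposes of them with ``le théorème en découle directement'': it never actually proves that~$\rho$ is the end of the flat initial slope of the dual Newton polygon, nor that it is the smallest radius of a root of~$\tilde{e}(T)$; the Legendre-duality remark preceding the corollary is stated without justification. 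You supply a complete argument for both: the factorization~$\tilde{e}(T)=\prod_j(1-T/t_j)$ over an algebraic closure together with the multiplicativity of the Gauss norm gives~$N(r)=\prod_j\max\left(1,r/\abs{t_j}\right)$, hence~$N(r)=1$ if and only if~$r\leq\min_j\abs{t_j}$, and comparing with the equivalence~$N(r)=1\Leftrightarrow r\leq\rho$ established from~\eqref{formuleintegraliterayon} identifies~$\rho$ with the smallest root radius. This is exactly the standard fact the paper leaves implicit, so your write-up is complete where the paper's is only a sketch. Your closing caution about slope-orientation conventions is sensible but harmless here, since every step you actually use is phrased in terms of the Gauss norm~$N(r)$ rather than in terms of slopes of the classical polygon.
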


\begin{proof}
Le rayon de convergence de~${\exp(P(T))}$ est la borne supérieure des~$\abs{\lambda}$, relatifs aux~$\lambda$ dans~$\mathbf{C}_p^\times$ tels que la série~$\exp(P(\lambda\cdot T))$ a rayon de convergence au moins~$1$.

D’après le dernier critère du Théorème~\ref{thm2}, cela revient à l’intégralité de~$\tilde{e}(\lambda\cdot T)$. Autrement dit à ce que sa norme de Gauß, au rayon unité,~$\max_{i=0}^d \abs{\tilde{a}}\abs{\lambda}^i$ soit bornée par~$1$.

Le théorème en découle directement.
\end{proof}
\paragraph{}  Pour calculer~$\tilde{e}(T)$, il suffit de développer jusque l’ordre~$D$ une solution formelle de
$$y^\prime=\tilde{L}(T)\cdot y,$$
où~$\tilde{L}(T)=\frac{d}{dT}\widetilde{P}(T)$.
\paragraph{} Il semble que le polygone de Newton de~$\widetilde{e}(T)$ donne la fonction rayon de convergence sur le segment ....
Autrement dit, pour un point générique~$a$ de rayon~$r$, le rayon~$\rho(a)$ d’une solution formelle de~\eqref{equadiff} au point~$a$ vérifie
$$\min\{\abs{a};r(a)\}=\abs{\tilde{e}(a)}.$$
De manière équivalente
$$r(a)=\left\{
\begin{array}{ll}
\rho&\text{ si $\abs{a}\leq\rho$},\\
\abs{\tilde{e}(a)}&\text{ sinon.}
\end{array}\right.$$

Dans un tel cas, la somme des multiplicités des racines de~$\tilde{e}(T)$ de rayon~$\rho$ s’interprète comme l’indice de l’équation différentielle~\eqref{equadiff} au rayon~$\rho$.

\section{Preuve du Théorème~\ref{thm1}}\label{sectionpreuve} Afin de démontrer le théorème~\ref{thm1} commençons par quelques observations. Rappelons que~$\zeta$ est une racine d’ordre~$p^{d+1}$ de l’unité. Notons~$\zeta_d=\zeta$, et définissons~$\zeta_{d-i}=\zeta^{p^i}$. Ainsi, pour~$0\leq i\leq d$, la racine de l’unité~$\zeta_i$ est d’ordre~$p^{i+1}$.

 Soit~$w_d$ le vecteur de Witt de~$W(K)$ de composantes fantômes~\eqref{fantomepulita} défini dans le Théorème~\ref{thm1}. Soient également~$w_{d-i}=F^i(w_d)$, qui a composantes fantômes
 $$(\zeta_{d-i}-1,\zeta_{(d-i)-1}-1, \zeta_{(d-i)-2}-1,\ldots).$$
\begin{lemme}\label{lemmeimbrication} Le polynôme de Lubin-Tate~$(X+1)^p-1$ du groupe multiplicatif admet le cycle
\begin{equation}\label{cycle} \zeta_d-1\mapsto\zeta_{d-1}-1\mapsto\ldots\mapsto \zeta_1-1\mapsto0\mapsto 0 \mapsto \ldots.\end{equation}
Ce polynôme s’écrit~$X\cdot H(X)$ où~$H$est à coefficients entiers. Pour~$i$ dans~$\{1;\ldots;d\}$, on a les identités~$w_{j-1}=w_j\cdot H(w_j)$. Il s’ensuit l’imbrication d’ideaux de~$W(R)$ suivante
\begin{equation} \label{imbrication} (w_d) \supseteq (v_{d-1}) \supseteq \ldots \supseteq (w_1) \supseteq (0).\end{equation}
\end{lemme}
Ce lemme est sans difficulté et reprend partie de \cite[Proposition~2.1, p.511]{Pulita}.

\begin{proof}[Démonstation du Théorème~\ref{thm1}] Soit~$w_d$ le vecteur de Witt de~$W(K)$ de composantes fantômes~\eqref{fantomepulita} du Théorème~\ref{thm1}. Comme la suite~\eqref{fantomepulita} n’a que ses~$d$ premières composantes non nulles (cf.~\eqref{cycle} \emph{supra}), il est immédiat que~$w_d$ appartient à~${}_dW(K)$ (cf.~\ref{fantome}~\eqref{fantomeFV}). Tout revient à démontrer, d’une part que~$w_d$ appartient à~${}_dW(R)$, de sorte que l’application
\begin{equation}\label{isomorphisme} \phi^d_R:W_d(R)\xrightarrow{\lambda\mapsto\lambda\cdot w_d} {}_dW(R)\end{equation}
soit bien définie, et de montrer d’autre part qu’il s’agit d’un isomorphisme. 

\begin{proof}[Preuve de l’intégralité]  L’intégralité de~$w_d$ revient à celle de la série correspondante
\begin{equation}\label{expRobba} \exp\left((\zeta-1)T+(\zeta^p-1)T^p+\ldots+(\zeta^{p^d}-1)T^{p^d}\right)\end{equation}
qui est une exponentielle de Robba au sens de \cite[§0.2]{Pulita}. Notre cas particulier est couvert par le numéro~\ref{numeromatsuda},~p.\pageref{numeromatsuda} (cf.~\cite[Lemme~1.5]{Matsuda}). Pour des méthodes plus générales, voir~\cite{Pulita}.
\end{proof}

\begin{proof}[Preuve de l’injectivité]
L’injectivité est manifeste sur l’écriture de~\eqref{isomorphisme} en composantes fantômes (cf.~\ref{fantome})
\begin{equation}\label{isofantome}\left(\phi_1,\ldots,\phi_d\right)\mapsto\left({\phi_1}\cdot({\zeta_d-1}),\ldots,{\phi_d}\cdot({\zeta_1-1}), 0,0,\ldots\right)\end{equation}
 Par surcroît,~$\phi^d_R$ s’étend en un isomorphisme~$W_d(K)\xrightarrow{\phi^d_K} {}_dW(K)$. 
\end{proof} 

Il reste à montrer la surjectivité, à laquelle nous consacrons le numéro suivant.
\end{proof}
\paragraph{}\label{gamma0} Utilisons la notation~$\gamma_0(t)$ pour le vecteur de Witt~$(t,0,0,\ldots)$, de composantes fantômes~$(t,t^p,t^{p^2},\ldots)$. Il s’agit de la courbe $p$-typique universelle de~\cite[§4]{Cartier}. Nous avons l’identité
\begin{equation}\label{identiteF}F(\gamma_0(t))=\gamma_0(t^p).\end{equation}

\begin{proof}[Preuve de la surjectivité.] Nous souhaitons montrer la surjectivité de~\eqref{isomorphisme}. Autrement dit que tout~$x$ dans~${}_dW(R)$ est atteint par~\eqref{isomorphisme}. Écrivons~$x=w\cdot v_d$ avec~$w$ dans~$W_d(K)$. Il suffit de montrer que~$w$ est entier.

La présente démonstration se fait par récurrence sur~$d$.

L’étape d’initialisation se fait pour~$d=1$. Elle se traduit par le fait bien connu suivant: la série~$\exp(\lambda\cdot T)$ n’a de coefficient entiers que si~$\lambda$ est un multiple de~$(\zeta-1)$ par en entier. Cela résulte de l’estimation classique des coefficients de la série exponentielle:~$\limsup\frac{1}{n}\abs{n!}=\sup\frac{1}{n}\abs{n!}=\abs{\zeta-1}={\abs{p}}^{1/(p-1)}$, conséquence du calcul exact de la valuation~$p$-adique
$$ \log_{\abs{p^{-1}}}\abs{n}=\sum_{d\geq1} \floor{\frac{n}{p^d}}.$$
(\cite[Problèmes 164-165, Lemme~4.5.5, p.115]{Gouvea}.)

Pour l’étape de récurrence, nous souhaitons démontrer la surjectivité de~\eqref{isomorphisme} pour un indice~$d$. Autrement dit l’intégralité de~$w$ correspondant à un~$x$ donné dans~${}_dW(R)$. Nous pouvons supposer la surjectivité de~\eqref{isomorphisme} jusqu’à l’indice~$d-1$ inclus, et pour n’importe quel corps tel que dans le Théorème~\ref{thm1}. Soit~$C$ une extension ultramétrique algébriquement close de~$K$, et notons~$S$ son anneau d’entiers. Ainsi, tout élément de~$R$ a une racine~$p^{d-1}$-ième dans~$C$, qui est nécessairement dans~$S$. 

L’étape de récurrence utilise la suite exacte
$$0\xrightarrow{}{}_{d-1}W(R)\to {}_{d}W(R)\xrightarrow{F^{d-1}}{}_1W(R)$$

 Comme~$F^{d-1}(x)$ appartient à~${}_1W(R)$, il s’écrit~$\lambda\cdot  w_1$ avec~$\lambda$ dans~$R$, d’après l’étape d’initialisation. Soit~$\tilde{\lambda}$ un entier de~$C$ solution de~${\tilde{\lambda}}^{p^{d-1}}=\lambda$. Alors~\eqref{identiteF} nous donne~$F^{d-1}(\gamma_0(\tilde{\lambda}))=\gamma_0(\lambda)$. Par ailleurs,~$F^{d-1}(w_{d})=w_1$. Par conséquent, 
$$F^{d-1}(\tilde{\lambda}\cdot w_{d})= \gamma_0(\lambda)\cdot  w_1=\lambda\cdot w_1=F^{d-1}(x).$$
Donc~$x-\gamma_0(\tilde{\lambda})\cdot w_{d}$ appartient au noyau~${}_{d-1}W(C)$ de~$F^{d-1}$. Mais~$\tilde{\lambda},w_{d}$ et~$x$ sont à coordonnées entières (dans~$S$, $\mathbf{Z}_p[\zeta]$ et~$R$ respectivement). Il s’agit donc d’un élément de~${}_{d-1}W(S)$. En appliquant l’hypothèse de récurrence, au corps~$C$, nous déduisons que~$x-\gamma_0(\tilde{\lambda})\cdot w_{d}$ s’écrit~$y\cdot w_{d-1}$ avec~$y$ dans~$W_{d-1}(S)$. Bref
$$x = \gamma_0(\tilde{\lambda})\cdot w_{d} + y \cdot w_{d-1}.$$
Mais~$w_{d-1}$ s’écrit aussi~$w_{d}H(w_{d})$ avec~$XH(X)=(X+1)^p-1$ (Lemme~\ref{lemmeimbrication}). Finalement~$x$ s’écrit~$w\cdot w_{d+1}$ avec~$w=\gamma_0(\tilde{\lambda})+y\cdot H(w_{d})$. Ainsi~$w$ est à coordonnées dans~$S$; mais aussi dans~$K$. Donc~$w$ est dans~$W_d(R)$.
\end{proof}

\paragraph{}Notre preuve de la surjectivité diffère des méthodes de~\cite{Pulita}, qui exploite le lien avec les équations différentielles~$p$-adiques. Notre preuve est plus algébrique et ne nécessite pas la complétude de~$R$. Pour l’injectivité de~\eqref{isofantome}, nous utilisons que les~$\zeta_{d-i}$ ne sont pas diviseurs de zéro. Pour la surjectivité, nous utilisons la norme ultramétrique et le fait que~$R$ est la boule unité. Et nous réutilisons que~$R$ est intégralement clos. 

\section{Fonction rayon de convergence et Questions annexes}\label{developpements} En utilisant~\eqref{formulerayon}, nous pouvons en déduire une formule «~globale~» du rayon de convergence d’une solution formelle de l’équation différentielle~\eqref{equadiff}, une formule qui s’applique en une origine indéterminée. Suivons la méthode de G.~Christol et faisons une translation de l’origine des coordonnées de~$0$ vers~$a$, où~$a$ appartient à une extension ultramétrique de~$K$. Alors nous pouvons récrire
\begin{subequations}
\begin{equation} P(T)=P(a)+\sum_{i=1}^D a_i(a)(T-a)^i,\label{nota}\end{equation}
ou de manière équivalente
\begin{equation} P(T,a):=P(T+a)=P(a)+\sum_{i=1}^D a_i(a)T^i,\end{equation}
avec les polynômes
\begin{equation}\label{formulea} a_i(a):=\sum_{k=i}^D a_k(0)\cdot\binom{i}{k}\cdot a^{k-i}.\end{equation}
Construisons
\begin{equation}\label{Pfamille}\tilde{P}(T,a)=\frac{a_1(a)}{\zeta_{d_1}-1} T+\frac{a_2(a)}{\zeta_{d_2}-1}T^p+\ldots+\frac{a_D(a)}{\zeta_{d_D}-1}a_DT^{D},\end{equation}
et formons ensuite la série~$\exp(\widetilde{P}(T,a))\in \Lambda(K[a])$, que l’on tronque au degré~$D$ relativement à~$T$, ce qui donne un polynôme
\begin{equation}\label{formuleetildefamille}\widetilde{e}(T,a)=1+\sum_{i=1}^D \widetilde{a}_i(a)T^i,\end{equation}
\end{subequations}
l’unique élément dans~$(1+TK[a]+\ldots+T^DK[a])\cap(\exp(\widetilde{P}(T,a))+T^{D+1}K[a][[T]]$.
\begin{theo}
Le rayon\footnote{Il s’agit de la fonction~$\mathrm{RoC}$ de~\cite{Christol}} de convergence~$\rho(a)$ de la série de Taylor en~$a$ solution non nulle de l’équation~\eqref{equadiff} est tel que, avec les notations~\eqref{nota} à~\eqref{formuleetildefamille} ci-dessus,
\begin{equation}\label{formuleglobalerayon}-\log\rho(a)=\max_{1\leq i\leq D}\frac{1}{i}\cdot{\log\abs{\widetilde{a_i}(a)}}.\end{equation}
\end{theo}
\paragraph{}Il s’ensuit que la «~fonction~»~$a\mapsto-\log\rho(a)$ jouit de nombreuses propriétés des fonctions de la forme «~logarithme de la norme d’un polynôme~».  (cf.~\cite[Remarque au numéro~§4.5~p.206]{Robba3})
\begin{itemize}
\item Propriétés de convexité au sens des polygones de Newton (ou de superharmonicité).
\item Elle est naturellement définie sur l’espace analytique, au sens de Berkovich, de la droite affine sur le complété de~$K$, et y définit une fonction continue.
\item Elle est déterminée, à une constante additive près, par le lieu de ses «~zéros~», comptés avec multiplicité, par une formule de Poincaré-Lelong~\cite[5.20]{BakerRumely}.  (voir~\cite{Christol} Proposition~3.3)
\item Elle se calcule par rétraction au un sous-graphe fini engendré par le support de son diviseur.
\end{itemize}

Mentionnons~\cite{Baldassarri,Pulitarayon,PulitaPoineau} pour les propriétés qualitatives des fonctions rayon de convergence dans un contexte plus général.

\paragraph{\emph{Question d’effectivité en mémoire}}
D’après~\eqref{formulea}, le polynôme~$a_i(a)$ est de degré au plus~$D-i$. On peut en déduire que le polynôme~$\widetilde{a_i}(a)$ a degré au plus~$i(D-1)$, et est à coefficients dans l’extension~$K[\zeta]$. C’est une extension de degré au plus au plus~$(p-1)p^{\floor{\log_p(D)}}$ sur~$K$. 

Ainsi, le polynôme~\eqref{formuleetildefamille} est déterminé par au plus~$D(D-1)^3 \cdot \floor{\log_p(D)}/2\leq D^3\log(D)$ coefficients pris dans~$K$.

\paragraph{\emph{Construction du diviseur}} Pour ramener la formule~\eqref{formuleglobalerayon}, qui est un maximum, au logarithme de la norme d’un unique polynôme, nous pouvons utiliser la construction suivante. Introduisons~$L=\mathrm{Frac}(K[\zeta]\{T_1;\ldots;T_D\})$ le corps des fractions de l’algèbre de Tate sur~$K[\zeta]$, éventuellement complété relativement à la norme de Gauß. Formons le polynôme
$$\widetilde{A}(a)=1+\sum_{i=1}^D (\widetilde{a_i}(a))^{D!/i} T_i\text{ dans }K[\zeta]\{T_1;\ldots;T_D\}[a].$$

Soit~$\mathrm{Div_L}(\widetilde{A})=\sum_{x\in\overline{L}} n_x \delta_x$ le diviseur de~$\widetilde{A}$, où~$\overline{L}$ est une extension algébrique et algébriquement close de~$L$, où~$n_x$ désigne le degré d’annulation de~$\widetilde{A}$ en~$x$, et où~$\delta_x$ est la masse de Dirac placée en~$x$. À chaque~$x$ nous pouvons associer un point\footnote{C’est un point de type~(1).} dans l’espace de Berkovich de la droite affine sur le complété de~$L$, mais aussi sa projection\footnote{Cette projection n’est plus nécessairement un point de type~(1), mais peut être un point de type~(2).} sur l’espace de Berkovich de la droite affine sur le complété de~$K$. Ce point est donné par
$$K[T]\xrightarrow{P(T)\mapsto\abs{P(x)}}\mathbf{R}$$
pour la valeur absolue sur~$\overline{L}$ qui prolonge celle de~$L$. Notons~$[x]$ cette projection, et formons
\begin{equation}\label{diviseur}\mathrm{Div_K}(\widetilde{A})=\sum_{x\in\overline{L}} n_x \delta_{[x]}.\end{equation}
Alors la fonction~$\rho(a)$ est déterminée à une constante multiplicative près par l’identité de séries
$$D!\cdot\Delta(-\log(\rho))=\mathrm{Div_K}(\widetilde{A})$$
sur l’espace de Berkovich de la droite affine sur le complété de~$K$. On peut en déduire que~$\rho$ est déterminée par rétraction à l’enveloppe convexe du support de~\eqref{diviseur}.  

Notons~$\pi:A_L\to A_K$ la projection entre espaces de Berkovich précédente. La fonction réelle~$\bigl|\widetilde{A}\bigr|$ sur~$A_L$ et la fonction réelle~$\rho(a)$ sur~$A_K$ sont reliées explicitement par 
$$\rho(a)=\max_{\pi(x)=a} \bigl|\widetilde{A}\bigr| (x).$$

\newcommand{\Ocal}{\mathcal{O}}
\paragraph{\emph{Question annexe sur l’indice cohomologique}} Il est possible que la mesure~$\mu=\mathrm{Div_K}(\widetilde{A})/D!$ aie l’intreprétation suivante. Soit~$A$ un affinoïde de~$A_K$, et soit~$\Ocal(A)^\dagger$ l’algèbre des fonctions surconvergentes sur~$A$. Alors~$\mu(A)$ détermine la dimension de la cohomologie de de Rham de l’équation~\eqref{equadiff} à coefficients dans~$\Ocal(A)^\dagger$.

Dans le cas d’un polynôme, la considération du polygone de Newton donne lieu à une factorisation. Plus généralement la détermination de son diviseur donne lieu à sa factorisation en facteurs irréductibles. 

On s’attend à une factorisation analogue de l’équation différentielle~\eqref{equadiff} en relation avec sa fonction rayon de convergence. Cette factorisation n’est pas étrangère à l’algorithme de~\cite{Christol}, et il semble que cela revienne au calcul des coordonnées du vecteur de Witt associé à~$\exp(\widetilde{P}(T))$.

\appendix
\section{{Exemples}}\label{exemples} \emph{Illustrons le Théorème~\ref{thm2}, via la formule~\eqref{formuleintegraliterayon}, sur quelques exemples fondamentaux.}
\makeatletter
\renewcommand{\thesubsection}{\textbf{\@Alph\c@section.\@arabic\c@subsection}}
\makeatother

 Reprenons les notations~$\zeta_i$ du début de la section~\ref{sectionpreuve}. Autrement dit~$\zeta_i$ est une racine de l’unité d’ordre~$p^{i+1}$ et les~$\zeta_i$ satisfont à la relation de compatibilité~$(\zeta_{i+1})^p=\zeta_i$. Il sera commode de poser comme abbréviation~$\pi_i=\zeta_i-1$. Rappelons que nous avons (\cite[II~§4.4]{Robert})
\begin{subequations}
\begin{equation}\abs{\pi_0}=\abs{p}^{1/(p-1)}\label{rayon0}\end{equation} 
et
\begin{equation}\abs{\pi_{i}}=\abs{\pi_0}^{1/p^i}=\abs{p}^{\left.1\middle/p^i(p-1)\right.}.\label{rayoni}\end{equation}
Retenons la relation suivante, déduite des deux précédentes,
\begin{equation}
\left.\abs{\pi_{0}}\middle/\abs{\pi_1}\right.=
\abs{p}^{\left(1\middle/(p-1)\right)-\left.1\middle/p(p-1)\right.}
=
\abs{p}^{\left.1\middle/p\right.}
.\label{rayon01}\end{equation}
\end{subequations}

\begin{example}[\emph{La série~$\exp(T)$}]
 Nous avons~$P(T)=T$ et~$d=0$ (ou~$D=1$). Alors~$\widetilde{P}(T)=\frac{1}{\pi_0}T$, et
$$\widetilde{e}(T)=1+\frac{1}{\pi_0}T\equiv\exp(\tilde{P}(T))\pmod{T^2}$$
 L’intégralité de~$\widetilde{e}(T)$ s’écrit
\begin{equation}\label{conditionexp}\abs{T}\leq\abs{\pi_0}.\end{equation}
On retrouve bien le rayon de l’exponentielle, vu~\eqref{rayon0} et~\cite[Tables, p.427]{Robert}.
\end{example}

\begin{example}[\emph{La série~$\exp(T+0\cdot T^p)$}]  Reprenons~$P(T)=T$, mais~$d=1$. Autrement dit~$P(T)$ est vu comme polynôme $p$-typique de degré au plus~$p$. Cette fois
$$\widetilde{P}(T)=\frac{1}{\pi_1}T+\frac{0}{\pi_0}T^p,\text{ et }\widetilde{e}(T)=1+\frac{1}{\pi_1}T+\frac{1}{2!\cdot(\pi_1)^2}T^2+\ldots+\frac{1}{p!\cdot(\pi_1)^p}T^p.$$ En examinant les conditions d’intégralité de chacun des termes de~$\tilde{e}(T)$, nous trouvons, pour les~$p-1$ premiers termes non constants, une condition de la forme~$\abs{T}^i\leq\abs{\pi_1}^i$, condition équivalente à
\begin{equation}\label{conditionpi2}\abs{T}\leq\abs{\pi_1},\end{equation}
la condition la plus contraignante étant celle du dernier terme, celui d’ordre~$p$, laquelle s’écrit
\begin{equation}\abs{T}\leq\abs{p}^{1/p}\cdot\abs{\pi_1}.\end{equation}
On retrouve bien une condition équivalente à~\eqref{conditionexp} en vertu de~\eqref{rayon01}.
\end{example}

\begin{example}[\emph{La série~$\exp(T+T^p/p)$}] Prenons~$P(T)=T+T^p/p$ et~$d=1$. Alors nous obtenons 
 $\widetilde{P}(T)=\frac{1}{\pi_1}T+\frac{1}{\pi_0}T^p,$ et calculons
\begin{equation}\label{etilde3}\widetilde{e}(T)=\left[1+\frac{1}{\pi_1}T+\frac{1}{2!\cdot(\pi_1)^2}T^2+\ldots+\frac{1}{p!\cdot(\pi_1)^p}T^p\right]+\left[\frac{1}{p\cdot(\pi_0)}T^p\right].\end{equation}
Comme précédemment, les~$p-1$ premiers termes non constants redonnent la condition~\eqref{conditionpi2}. La condition d’ordre~$p$ demande un peu de travail. Simplifions 
$$\left[\frac{1}{p!\cdot(\pi_1)^p}T^p\right]+\left[\frac{1}{p\cdot(\pi_0)}T^p\right]=\frac{(\pi_0)+(p-1)!\cdot(\pi_1)^p}{p\cdot(\pi_1)^p\cdot\pi_0}T^p.$$

Examinons le dénominateur, modulo~$p(\pi_1)^2$. Nous avons
$$(p-1)!(\pi_1)^p\equiv -(\pi_1)^p \pmod{p(\pi_1)^2}$$
d’où, rappelant~${\pi_1}^p+p\pi_1\equiv \pi_0 \pmod{p(\pi_1)^2}$,
$$(\pi_0)+(p-1)!\cdot(\pi_1)^p\equiv p\cdot\pi_1
\pmod{p(\pi_1)^2}.$$
Nous déduisons~$\abs{(\pi_0)+(p-1)!\cdot(\pi_1)^p}=\abs{p\pi_1}$. La condition d’intégralité du terme d’ordre~$p$
$$\abs{\frac{(\pi_0)+(p-1)!\cdot(\pi_1)^p}{p\cdot(\pi_1)^p\cdot\pi_0}T^p}\leq1$$
devient donc
$$\abs{\frac{p\pi_1}{p\cdot(\pi_1)^p\cdot\pi_0}T^p}=\abs{\frac{1}{(\pi_1)^{p-1}\cdot\pi_0}T^p}\leq1,\text{ soit }\abs{T}\leq\abs{(\pi_1)^{p-1}\cdot\pi_0}^{1/p}.$$
Finalement, nous calculons, d’après~\eqref{rayon0} et~\eqref{rayoni},
$$\abs{(\pi_1)^{p-1}\cdot\pi_0}=\abs{p}^{(p-1)/p(p-1)+1/(p-1)}=\abs{p}^{(2p-1)/p(p-1)}.$$
et la condition devient
\begin{equation}\label{conditionpi3}\abs{T}\leq\abs{p}^{\left.(2p-1)\middle/p^2(p-1)\right.}=\abs{\pi_2}^{2p-1}\end{equation}
ce qui corrobore~\cite[Tables p.427]{Robert}. Pour finir, vérifions que la condition~\eqref{conditionpi3} est bien plus contraignante que~\eqref{conditionpi2}. Nous avons
$$\abs{p}^{\left.(2p-1)\middle/p^2(p-1)\right.}=\abs{\pi_2}^{2p-1}=\abs{\pi_1}^2\abs{\pi_2}^{-1}=\abs{\pi_1}\cdot *$$
avec comme facteur de comparaison~$*=\abs{\pi_1/\pi_2}$ qui est bien~$<1.$
\end{example}

\begin{remark} Avec le changement de variable~$T=\pi U$, où~$\pi^{p-1}=-p$, nous en déduisons le rayon de la série
\begin{equation}\label{thetaDwork}
\theta(U)=\exp(X+X^p/p)=\exp(\pi(U-U^p)),
\end{equation}
qui donne la fonction de scindage de Dwork. À savoir
$$\abs{\pi U}\leq \abs{p}^{\left.(2p-1)\middle/p^2(p-1)\right.}$$
ou, remarquant~$\abs{\pi}=\abs{\pi_0}=\abs{p}^{1/(p-1)},$
$$\abs{ U}\leq 
\abs{p}^{\left.(2p-1-p^2)\middle/p^2(p-1)\right.}
=
\abs{p}^{-\left.(p-1)^2\middle/p^2(p-1)\right.}
=
\abs{p}^{-\left.(p-1)\middle/p^2\right.}.$$
Le signe, négatif, de l’exposant atteste de la «~surconvegence~» de la série~\eqref{thetaDwork}, c’est-à-dire du fait que son rayon de convergence majore strictement~$1$.
\end{remark}

\begin{example}[\emph{Les séries~$\exp(T+a_2T^2+a_3T^3+\ldots+a_{p-1}T^{p-1}+T^p/p)$}] Dans ce cas nous pouvons décomposer le problème selon les parties $p$-typiques:
$$\exp(T+T^p/p)\text{ de rayon }\abs{p}^{(2p-1)/p^2(p-1)}$$
et, pour~$2\leq i \leq p-1$,
$$\exp(a_iT^i)\text{ de rayon }\frac{\abs{p}^{1/(p-1)}}{\abs{a_i}^{1/i}}.$$
Le rayon de convergence de la série~$\exp(T+a_2T^2+a_3T^3+\ldots+a_{p-1}T^{p-1}+T^p/p)$ est
$$\min\left(\left\{\abs{p}^{(2p-1)/p^2(p-1)}\right\}\cup\left\{\frac{\abs{p}^{1/(p-1)}}{\abs{a_i}^{1/i}}~\middle|~2\leq i \leq p-1 \right\}\right),$$
même si le minimum est atteint plusieurs fois.

À titre d’exemple, si tous les coefficients~$a_i$ sont des entiers:~$\abs{a_i}\leq 1$, et que l’un au moins est une unité:~$\abs{a_i}=1$, alors le rayon de convergence est
$$\abs{p}^{1/(p-1)}.$$
\end{example}

\section{Sur l’Équivalence solubilité/integralité}\label{annexe} \emph{Nous nous plaçons dans le contexte du Théorème~\ref{thm2}. } 
Dans cette annexe, nous redonnons une démonstration de l’équivalence entre les deux premiers points du Théorème~\ref{thm2}. C’est également prétexte, 
à illustrer l’utilisation des méthodes déployées dans cet article, mais aussi à donner des estimations plus fines sur les séries étudiées, généralisant avantageusement~Lemmes~\ref{exo1} et~\ref{exo2}. Ces estimations devraient
s’avérer utiles dans des perspectives algorithmiques, ou servir de modèle de référence pour des équations différentielles plus générales.

\subsection{Argumentation}
Démontrons l’équivalence entre les deux premiers points du Théorème~\ref{thm2}. Le sens réciproque~$(2)\Rightarrow(1)$ de l’équivalence est clair: une série à coefficient entiers converge sur le disque unité ouvert.\footnote{Par exemple en utilisant  la formule~\eqref{formuleroc} plus bas.} Concernant le sens direct~$(1)\Rightarrow(2)$, nous souhaitons montrer que la série~$\exp(P(T))$, supposée de rayon au moins~$1$, est à coefficients entiers.

Rappelons que le rayon de convergence~$\rho$ de la série
\begin{equation}\label{truc}
e(T):= \sum_{i\geq0}b_i \alpha^i T^i
\end{equation}
est donné par la formule de Cauchy
\begin{equation}\label{formuleroc}1/\rho=\limsup_{i\geq0}\sqrt[i]{\abs{b_i}}.\end{equation}
Considérons également la quantité
\begin{equation}\label{formuleiota}1/\iota=\sup_{i\geq0}\sqrt[i]{\abs{b_i}},\end{equation}
dont l’inverse~$\iota=\iota(\exp(P(T)))$ sera nommé \emph{rayon d’intégralité de~$\exp(P(T))$}. En fait la série~$\exp(P(\alpha T))$ est à coefficients~$\alpha^i\cdot b_i$ tous entiers si et seulement si~$\abs{\alpha}\leq\iota$ (elle n’est de rayon de convergence au moins~$1$ que si~$\abs{\alpha}\leq\rho$).

Il suit immédiatement de~\eqref{formuleroc} et~\eqref{formuleiota} l’inégalité
\begin{equation}\label{inegalitetriviale} \iota\leq\rho.\end{equation}
L’implication~$(1)\Rightarrow(2)$ du Théorème~\ref{thm2} revient, dans le cas de la série~$e(T)=\exp(P(T))$ à montrer:
\begin{equation}\label{propriete}\rho(e(T))=1\Rightarrow \iota(e(T))=1.\end{equation}
Pour~$\alpha$ choisi non nul dans une extension ultramétrique de~$K$, on vérifie directement sur les formules~\eqref{formuleroc} et~\eqref{formuleiota},  qu’un changement de variable~$T\mapsto\alpha T$ induit les propriétés d’homogénéité 
\begin{equation}\label{homogene}\rho(e(\alpha T))=\rho(e(T))/\abs{\alpha}\text{ et }\iota(e(\alpha T))=\iota(e(T))/\abs{\alpha}.\end{equation}
Par homogénéité (et car on peut choisir~$\alpha$ dans une extension qui réalise tout nombre réel comme valeur absolue), la propriété~\ref{propriete} vaut pour toutes les séries~$e(T)$ de la forme exponentielle de polynômes si et seulement si
\begin{equation}\label{proprieteforte}
\forall\abs{\alpha}\in\mathbf{R}_{>0},~ \rho(e(T))=\abs{\alpha}\Rightarrow\iota(e(T))=\abs{\alpha}.\end{equation}
vaut pour toutes ces mêmes séries. En effet~\eqref{proprieteforte} est manifestement plus fort que~\eqref{propriete}, et~\eqref{proprieteforte} se ramène à~\eqref{propriete} par homogénéité.

Compte tenu de l’inégalité~\eqref{inegalitetriviale}, la propriété~\eqref{proprieteforte} se ramène à l’inégalité manquante
\begin{equation}\label{inegalitedure} \iota\geq\rho.\end{equation}
Par homogénéité, on peut supposer~$\iota=1$. Tout revient donc à montrer
\begin{equation}\label{iimplicationdure} \iota=1\Rightarrow\rho\geq1.\end{equation}

Pour cela nous utilisons\footnote{Soulignons que l’équivalence~$(2)\Leftrightarrow(3)$ a été démontrée indépendamment de l’équivlence~$(1)\Leftrightarrow(2)$ que nous cherchons ici à établir.} l’équivalence~$(2)\Leftrightarrow(3)$ dans la variante Corollaire~\ref{thm2bis} du Théorème~\ref{thm2}. En notant
$$\tilde{e}(T):=\exp\left(\tilde{P}(T)\right)=1+\sum_{i=1}^D \widetilde{b_i}T^i\pmod{T^{D+1}}\text{ et }1/\tilde{\iota}:=\sup_{1\leq i\leq D}\sqrt[i]{\abs{\widetilde{b_i}}}$$
il en ressort
$$\iota\geq 1 \Leftrightarrow \tilde{\iota}\geq1.$$
Là encore les deux expression sont homogènes, et on obtient en définitive
$$\iota=\tilde{\iota}.$$
Soit~$\upsilon=(u_1,u_2,\ldots,u_D)\in\W_D(K)$ le vecteur de Witt universel tronqué correspondant à~$\tilde{e}(T)$. D’après le Lemme~\ref{Lemmecritere}, l’intégralité de~$\upsilon$ équivaut à celle de~$\tilde{e}(T)$. Ainsi 
$$\tilde{\iota}\geq 1 \Leftrightarrow \sup_{1\leq i\leq D}\sqrt[i]{\abs{{u_i}}}\geq1.$$
Remarquons (\cite[(13.46), (9.27) with d=r=1, (13.59-60), (9.22) (cf. (6.26))]{Haze}) que~$\tilde{e}(\alpha T)$ correspond au produit~$(u_1,u_2,\ldots,u_D)\hat{\times}(\alpha,0,\ldots,0)$, qui (\cite[cf.~(13.47)]{Haze}, \cite[AC~IX.11~(39)]{BBKAC8}) vaut en fait~$(u_1\cdot\alpha,u_2\cdot\alpha^2,\ldots,u_D\cdot\alpha^D)$. Nous pouvons encore raisonner par homogénéité, et nous en tirons
$$\tilde{\iota} = \sup_{1\leq i\leq D}\sqrt[i]{\abs{{u_i}}}.$$
Notre argumentation est terminée si nous montrons l’énoncé suivant.
\begin{prop} Avec les notations précédentes, pour~$e(T)$ de la forme~$\exp(P(T))$ comme dans le Théorème~\ref{thm2}, nous avons
$$\rho=\iota=\tilde{\iota}=\sup_{1\leq i\leq D}\sqrt[i]{\abs{{u_i}}}.$$
\end{prop}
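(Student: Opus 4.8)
The plan is to notice that the preceding argumentation has already secured the chain $\iota=\tilde\iota=\sup_{1\le i\le D}\sqrt[i]{|u_i|}$ — using only the equivalence $(2)\Leftrightarrow(3)$ of Corollaire~\ref{thm2bis}, Lemme~\ref{Lemmecritere}, and the homogeneity relations~\eqref{homogene} — and that the trivial inequality~\eqref{inegalitetriviale} gives $\iota\le\rho$. So the entire Proposition collapses to the single missing inequality~\eqref{inegalitedure}, $\rho\le\iota$. Writing $e(T)=\sum_{n\ge0}b_nT^n$ and setting $M:=\sup_n\sqrt[n]{|b_n|}=1/\iota$ (a finite positive real, computable from the finite datum $(u_1,\dots,u_D)$), this inequality reads $\limsup_n\sqrt[n]{|b_n|}\ge M$; since $\limsup\le\sup=M$ is automatic, what must be shown is that the supremum is attained in the limit, i.e. $\limsup_n\sqrt[n]{|b_n|}=\sup_n\sqrt[n]{|b_n|}$.

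First I would record the purely algebraic recursion obtained by comparing the coefficients of $e'=P'\,e$ (equivalently, of $\log e=P$), namely $n\,b_n=\sum_{k\ge1}k\,a_k\,b_{n-k}$. The strategy is to exhibit an explicit infinite subsequence $n_j\to\infty$ along which $\sqrt[n_j]{|b_{n_j}|}\to M$. The model is the base case $e=\exp(a_0T)$, where $\sqrt[n]{|b_n|}=|a_0|\cdot\sqrt[n]{|n!|^{-1}}$ attains its supremum in the limit precisely along $n=p^k$ — this is exactly the computation of the valuation of $n!$ used to initialise the induction in the proof of Théorème~\ref{thm1}. The point of the recursion is that the factor $|n|^{-1}$, which is large exactly when $p\mid n$, is what forces the coefficients back up to maximal size; iterating it along indices of the form $n_j=i_0\,p^{j}$, where $i_0\le D$ is an index at which $M$ is essentially realised, should drive $\sqrt[n_j]{|b_{n_j}|}$ to $M$. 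It is convenient to reduce beforehand to a $p$-typical $P(T)=\sum_{i=0}^d a_iT^{p^i}$: by the orthogonal factorisation~\eqref{factorisationU} and remark~\ref{remarquable} the integrality radius of $\exp(P)$ is the minimum of those of the $p$-typical factors $\exp(P_n(T^n))$, and the same holds for $\rho$ since the factorisation is into independent pieces.

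The hard part will be the control of ultrametric cancellation in the recursion: one must guarantee that the dominant contribution to $b_{n_j}$ does not cancel, so that $|b_{n_j}|$ genuinely realises the extremal size rather than dropping below it. This is not a formality, for the phenomenon of \emph{overconvergence} is real: a product of two exponentials of \emph{equal} radius can converge strictly better than either factor, as in Dwork's splitting function $\exp(T+T^p/p)$ of the third example, whose radius strictly exceeds $|\pi_0|$, the common radius of its two factors. Consequently no soft device — radius of a product equals the minimum of the radii, the Newton polygon of a product as the union of the slopes, or the unit trick $f=e\,h^{-1}$ (valid only when the two factor radii differ) — can close the argument on its own; the equal-radius case is exactly where the subtlety lives, and is precisely what the finite Witt datum $(u_1,\dots,u_D)$ measures. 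I would therefore settle it by the refined $p$-adic coefficient estimates developed in this appendix (generalising Lemmes~\ref{exo1} and~\ref{exo2}), which provide the two-sided control $|b_n|\le M^n$ for all $n$ together with the matching lower bounds $\sqrt[n_j]{|b_{n_j}|}\to M$ along the chosen subsequence. Together these give $\limsup_n\sqrt[n]{|b_n|}=M=\sup_n\sqrt[n]{|b_n|}$, hence $\rho\le\iota$, and with~\eqref{inegalitetriviale} the desired $\rho=\iota=\tilde\iota=\sup_{1\le i\le D}\sqrt[i]{|u_i|}$.
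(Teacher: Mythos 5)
Your first paragraph is exactly the paper's own reduction, and it is correct: the chain $\iota=\tilde\iota=\sup_{1\le i\le D}\sqrt[i]{|u_i|}$ is already acquired by the preceding argumentation, \eqref{inegalitetriviale} gives one inequality, and everything rests on \eqref{inegalitedure}, i.e.\ on showing $\limsup_n\sqrt[n]{|b_n|}=\sup_n\sqrt[n]{|b_n|}$. You also diagnose correctly that the crux is ruling out ultrametric cancellation, and that no soft product-radius device can do it. But the proposal stops precisely where the proof must begin: the ``refined $p$-adic coefficient estimates developed in this appendix'' that you invoke to conclude are not something you may cite — they \emph{are} the paper's proof of this very Proposition (it reduces to Corollaire~\ref{Coroestim}, which rests on Proposition~\ref{Propestim}, which rests on Proposition~\ref{propannexe}, Proposition~\ref{Prop6sorite} and Corollaire~\ref{corosorite}). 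Note moreover that half of the ``two-sided control'' you request, $|b_n|\le M^n$, is vacuous (it is the definition of $M$ as a supremum); the whole content is the lower bound along a subsequence, and your proposed mechanism for it — the recursion $nb_n=\sum_k ka_kb_{n-k}$, with the factor $|n|^{-1}$ pushing coefficients back up — is never carried out, precisely because, as you say yourself, its cancellation analysis ``is not a formality''. So the proof is missing exactly where the mathematics is.

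There is a second, more local gap: your preliminary reduction to the $p$-typical case asserts that $\rho$ of $\prod_n\exp(P_n(T^n))$ is the minimum of the radii of the factors ``since the factorisation is into independent pieces''. This is one of the soft devices you exclude two sentences later: a product can overconverge past the minimum of its factors' radii, and prime-to-$p$ indices do not make coefficient supports disjoint; what is cheap is only $\rho(\prod)\ge\min$, the useless direction here. The paper never makes this claim; it estimates the whole product at once, and its mechanism is a congruence rather than a recursion. Namely: write the truncated Witt vector as a sum of monomial vectors $V^i(x_i,0,\ldots)$, so that $e(T)$ factors into shifted Pulita exponentials $e_{e-i}(x_iT^{p^i})$; prove by a Frobenius induction (Proposition~\ref{propannexe}) that all coefficients of $e_d$ have absolute value at most $|\pi_d|$, with equality exactly at $p$-power degrees; then expand the full product modulo $(\pi_d)^{1+\eps}$, where every cross term vanishes (Proposition~\ref{Prop6sorite}, Corollaire~\ref{corosorite}). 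That congruence is what guarantees, with no possible cancellation, that the coefficients at degrees $mp^n$ have absolute value exactly $|\pi_d|$, whence $|\pi_0|\le\limsup_i|a_i|\le|\pi_d|<1$ and $\rho=1$. Your guessed subsequence $i_0p^j$ is the right one, but some substitute for this congruence argument is indispensable, and it is what the proposal lacks.
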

La seule égalité non encore montrée est~$\rho=\iota$, où ce qui revient au même, d’après les autres égalités,~$\rho=\sup_{1\leq i\leq D}\sqrt[i]{\abs{{u_i}}}$. Nous avons vu qu’il suffit de montrer
$$\sup_{1\leq i\leq D}\sqrt[i]{\abs{{u_i}}}=1\Rightarrow \rho\geq 1.$$
C’est le contenu du Corollaire~\ref{Coroestim} qui suit.

\subsection{Quelques estimations}Notons~$\upsilon_D\in\W(R)$ le vecteur de Witt universel correspondant à la série
\begin{equation}\label{piexpunivannexe}
 \exp\left((\zeta_{d_1}-1)\frac{T^1}{1}+(\zeta_{d_2}-1)\frac{T^2}{2}+\ldots+(\zeta_{d_D}-1)\frac{T^D}{D} \right).\end{equation}
Dans la discussion précédente la série~$\widetilde{e}(T)$ était donnée par le vecteur de Witt tronqué~$v=(u_1,\ldots,u_D)$. La série~$e(T)$ est alors donnée par le vecteur de Witt non tronqué~$v\hat\times \upsilon_D$. (Le produit est bien défini.)

\begin{prop}\label{Propestim} Soit~$v=(u_1,\ldots,u_D)\in\W_D(R)$ tel que~$\sup_{1\leq i\leq D}\abs{u_i}=1$. Alors les coefficients de la série
\begin{equation}\label{sérieprop}
1+\sum_{i\geq1} a_i T^i\end{equation}
correspondant au vecteur de Witt~$v\hat\times \upsilon_D$ satisfont
\begin{equation}
\label{coeffasymptuniv}
\abs{\pi_0}\leq\limsup_{i\geq0}\abs{a_i}< 1
\end{equation}
\end{prop}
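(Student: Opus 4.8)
The plan is to make the coefficients explicit, isolate the elementary upper bound $\abs{a_i}\le 1$ coming from integrality, and then prove the two finer inequalities separately. Unwinding the ghost-component rules of~\ref{fantome}, the Witt vector $v\hat\times\upsilon_D$ has ghost components $c_n\cdot(\zeta_{d_n}-1)$ in degrees $n\le D$ and $0$ beyond, where $c_1,c_2,\dots$ are the ghost components of $v$ (integral, since $v$ has integral coordinates); hence the associated series is $e(T)=\exp(P(T))$ with $P(T)=\sum_{n=1}^{D}c_n(\zeta_{d_n}-1)T^n/n$. As $v\in\W_D(R)$ and $\upsilon_D\in\W(R)$ are both integral, so is their product, whence $e(T)\in\Lambda(R)$ and $\abs{a_i}\le 1$ for all $i$. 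Writing $\operatorname{val}$ for the valuation normalised by $\operatorname{val}(p)=1$, so that $\limsup_i\abs{a_i}=\abs{p}^{\liminf_i\operatorname{val}(a_i)}$, the target~\eqref{coeffasymptuniv} becomes the double inequality $0<\liminf_i\operatorname{val}(a_i)\le\operatorname{val}(\pi_0)$, where $\pi_0=\zeta_0-1$.

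For the right-hand inequality (that is, $\abs{\pi_0}=\abs{p}^{1/(p-1)}\le\limsup_i\abs{a_i}$) I would pass to the $p$-typical factorisation of~\ref{remarquable}, writing $e(T)=\prod_{p\nmid m}e_m(T^m)$ with each $e_m$ an integral $p$-typical $\pi$-exponential. The block of $P$ of least $\pi$-valuation is the one carrying $\zeta_{d_n}-1=\pi_0$, namely the top $p$-power degrees $n=mp^{d_m}$; comparing the corresponding $e_m$ with the base exponential $\exp(\pi_0 S)$, whose coefficients obey the classical computation $\operatorname{val}(\text{coeff of }S^k)=s_p(k)\operatorname{val}(\pi_0)$ (with $s_p(k)$ the sum of the base-$p$ digits, equal to $1$ at $k=p^j$), one exhibits an explicit family of degrees along which $\abs{a_i}\ge\abs{\pi_0}$; the blocks carrying $\zeta_j-1$ with $j\ge 1$ have strictly larger valuation and cannot spoil this. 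This is the easier of the two halves.

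The left-hand inequality $\liminf_i\operatorname{val}(a_i)>0$ (that is, $\limsup_i\abs{a_i}<1$) is the heart of the matter and the main obstacle. That each coefficient is individually small is clean: since every $\zeta_{d_n}-1$ has absolute value $<1$, the vector $v\hat\times\upsilon_D$ reduces to $0$ modulo the maximal ideal of $R$, whence $e(T)\equiv 1$ there and $\abs{a_i}<1$ for all $i\ge1$. The difficulty is that this does \emph{not} give $\limsup_i\abs{a_i}<1$ when $R$ is not discretely valued, the valuations $\operatorname{val}(a_i)$ being free to accumulate at $0$; what is needed is a \emph{uniform} positive lower bound. This cannot be read off term by term: both the individual multinomial terms of $\exp(P)$ and the ultrametric minimum in the recurrence $n\,a_n=\sum_{k=1}^{\min(n,D)}c_k(\zeta_{d_k}-1)\,a_{n-k}$ carry factors $\abs{n}^{-1}=\abs{p}^{-\operatorname{val}(n)}$ of absolute value $>1$, so positivity survives only after the massive cancellations encoded in the Lubin–Tate relations $\zeta_{j-1}-1=(\zeta_j)^p-1=((\zeta_j-1)+1)^p-1$ of Lemma~\ref{lemmeimbrication}. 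The plan is therefore structural: reduce once more to a single $p$-typical factor $e_m=AH(\lambda\cdot w_{d_m})$ furnished by Theorem~\ref{thm1}, where the case $\lambda=1$ is already settled (the coefficients of $AH(w_{d_m})$ lie, by Matsuda's formula~\eqref{FormuleMatsuda}, in the discretely valued ring $\mathbf{Z}_{(p)}[\zeta]$, so ``all valuations positive'' upgrades automatically to ``bounded below''); and then descend through the Frobenius tower $w_{d_m}\mapsto w_{d_m-1}=F(w_{d_m})\mapsto\cdots$, each step multiplying the relevant $\pi$-valuation by $p$ since $\operatorname{val}(\zeta_{j-1}-1)=p\,\operatorname{val}(\zeta_j-1)$. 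The compounding gain so obtained is exactly what must offset the $-\operatorname{val}(n)$ in the recurrence, and propagating it through the $R$-linear twist by $\lambda$ and through the finite product over $m$ (a coefficient of the product being a finite sum of products in which at least one factor is a non-constant coefficient of some $e_m$, of valuation $\ge\beta_m>0$, and the rest integral) is the step I expect to be the genuine obstacle.
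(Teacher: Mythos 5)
Your proposal leaves a genuine gap, and it sits exactly where you yourself flag ``the genuine obstacle'': the strict inequality $\limsup_{i}\abs{a_i}<1$ is never proved --- you only outline a descent through Frobenius towers plus a propagation step that you admit you cannot complete. Moreover, the difficulty you diagnose there (coefficientwise bounds $\abs{a_i}<1$ need not give a uniform bound when $R$ is not discretely valued) dissolves once one reduces modulo a fixed \emph{principal} ideal instead of the maximal ideal, and this is what the paper does. By Proposition~\ref{propannexe}, applied to each $p$-typical component of the series~\eqref{piexpunivannexe}, that series is congruent to $1$ modulo $\pi_d$, where $d=\floor{\log_p(D)}$; equivalently, $\upsilon_D$ maps to $0$ in $\W(R/(\pi_d))$. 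Reduction modulo $(\pi_d)$ being a ring homomorphism on Witt vectors, the product $v\hat\times\upsilon_D$ also maps to $0$, so every coefficient $a_i$ with $i\geq1$ lies in $(\pi_d)$, whence the uniform bound $\sup_{i\geq1}\abs{a_i}\leq\abs{\pi_d}<1$. This is a three-line argument requiring neither discreteness of the valuation, nor any Frobenius descent, nor control of the twist by the coordinates of~$v$; without something of this kind your proposal does not establish the strict inequality in~\eqref{coeffasymptuniv}.

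Your sketch of the other inequality also contains a factual slip and an unaddressed point. For $j\geq1$ one has $\mathrm{val}(\zeta_j-1)=1/(p^j(p-1))<1/(p-1)=\mathrm{val}(\pi_0)$, so the blocks carrying $\zeta_j-1$ with $j\geq1$ have strictly \emph{smaller} valuation (larger absolute value) than the $\pi_0$-block --- the opposite of what you assert. Ultrametrically, competing terms of larger absolute value cannot erase a term of size $\abs{\pi_0}$, but contributions of \emph{equal} absolute value can cancel, and your sketch offers nothing against this. The paper excludes cancellation by expanding the product explicitly modulo $(\pi_d)^{1+\eps}$ (Proposition~\ref{Prop6sorite}, completed by Corollaire~\ref{corosorite} to absorb the factors that are congruent to polynomials modulo suitable ideals): at each degree of the form $mp^n$ singled out there, exactly one term survives, of absolute value exactly $\abs{\pi_{d'}}$ for a suitable $d'\geq0$ determined by where the unit coordinates of $v$ sit; since there are infinitely many such degrees, $\limsup_i\abs{a_i}\geq\abs{\pi_{d'}}\geq\abs{\pi_0}$. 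Note finally that the valuation pattern of the relevant $p$-typical factor is obtained in the paper not by direct comparison with $\exp(\pi_0 S)$ but by induction on the height (Proposition~\ref{propannexe}), using that Frobenius acts as $x\mapsto x^p$ modulo $p$; your digit-sum formula covers only the base case $d=0$ (Lemmes~\ref{exo1} and~\ref{exo2}).
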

L’estimation~\eqref{coeffasymptuniv} implique, via la formule de Cauchy~\eqref{formuleroc}, l’énoncé suivant.
\begin{coro}\label{Coroestim} La série~\eqref{sérieprop} a rayon de convergence~$1$.
\end{coro}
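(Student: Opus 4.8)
Via la multiplicativité des composantes fantômes sous le produit de Witt, je commencerais par ramener l'énoncé à une exponentielle ordinaire. Le vecteur $\upsilon_D$ associé à~\eqref{piexpunivannexe} a pour composantes fantômes $\upsilon_D^{(n)}=\pi_{d_n}$ pour $1\le n\le D$ et $0$ au-delà (son logarithme étant le polynôme $\sum_{n=1}^D\pi_{d_n}T^n/n$); donc $v\hat\times\upsilon_D$ a pour composantes fantômes $v^{(n)}\pi_{d_n}$, et la série associée est $e(T)=\exp(P(T))$ avec $P(T)=\sum_{n=1}^D\frac{v^{(n)}\pi_{d_n}}{n}T^n$, où $v^{(n)}=\sum_{e\mid n}e\,u_e^{n/e}$. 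De $v\in\W_D(R)$ on tire $\abs{v^{(n)}}\le1$, et de~\eqref{rayoni} que $\abs{\pi_{d_n}}\le\abs{\pi_d}<1$, avec $\abs{\pi_{j-1}}=\abs{\pi_j}^p$ et la relation $\pi_{j-1}=\pi_j H(\pi_j)$ du Lemme~\ref{lemmeimbrication}, où $H(\pi_j)\equiv\pi_j^{p-1}\pmod p$. Le Lemme~\ref{Lemmecritere} et le Théorème~\ref{thm1}, appliqués comme dans le corps du texte, donnent $e(T)\in\Lambda(R)$, d'où $\abs{a_i}\le1$ pour tout $i$.

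Ensuite je dériverais $e=\exp(P)$, ce qui fournit $e'=P'e$, soit la récurrence $i\,a_i=\sum_{n=1}^{\min(i,D)}v^{(n)}\pi_{d_n}\,a_{i-n}$ avec $a_0=1$. Ultramétriquement, comme $\abs{v^{(n)}}\le1$, $\abs{a_{i-n}}\le1$ et $\abs{\pi_{d_n}}\le\abs{\pi_d}$, on obtient $\abs{i}\,\abs{a_i}\le\abs{\pi_d}$, donc $\abs{a_i}\le\abs{\pi_d}<1$ dès que $p\nmid i$. Les seuls indices où l'intégralité ne force pas déjà une borne strictement $<1$ sont donc les indices résonants $p\mid i$, car pour ceux-ci le facteur $1/\abs{i}$ dégrade l'estimation.

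Le point difficile est précisément la majoration stricte aux indices $p\mid i$. Une majoration terme à terme y échoue : pour $i=p$ le terme de partition $(\pi_0/p)\,T^p$ et le terme $(\pi_1)^p/p!$ ont chacun module $>1$, et seule leur somme est entière (c'est le phénomène de Dwork). Pour convertir $\abs{a_i}\le1$ en une majoration stricte, j'exploiterais l'autosimilarité $p$-typique : en combinant la congruence des fantômes $v^{(pn)}\equiv(v^{(n)})^p\pmod p$ (valable car $v$ est entier) avec $\pi_{d_n-1}=\pi_{d_n}H(\pi_{d_n})$ et $H(\pi_{d_n})\equiv\pi_{d_n}^{p-1}\pmod p$, je montrerais que $e(T)^p/e(T^p)=\exp(pP(T)-P(T^p))$ appartient à $1+J\,R[[T]]$ pour un idéal fixe $J$ avec $\abs{J}<1$ : autrement dit $e$ satisfait une congruence de type Dwork. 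En réinjectant cette congruence couche par couche selon la valuation $p$-adique (la couche $a_{pi}$ étant contrôlée par $a_i$ à un facteur de module $<1$ près), on amorce la borne $\abs{a_i}\le\abs{\pi_d}$ du cas $p\nmid i$ vers tous les $i$, d'où l'existence d'un $L<1$ (dépendant de $D$) tel que $\abs{a_i}\le L$ pour tout $i$ assez grand, soit $\limsup_i\abs{a_i}<1$. Cette propagation est l'obstacle principal.

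Pour la minoration $\limsup_i\abs{a_i}\ge\abs{\pi_0}$, plus routinière, j'isolerais un monôme $c\,T^m$ de $P$ de module $\abs c=\abs{\pi_0}$ à cofacteur inversible : il en existe car $d_n=0$ pour $D/p<n\le D$ impose $\abs{\pi_{d_n}}=\abs{\pi_0}$ en ces degrés, et $\sup_i\abs{u_i}=1$ rend un $v^{(n)}$ inversible. Le long de la suite géométrique de degrés $m\,p^k$, l'estimation classique $\abs{\pi_0^{p^k}/(p^k)!}=\abs{\pi_0}$ (valuation exacte des factorielles rappelée en Section~\ref{sectionpreuve}) montre que la contribution de ce monôme a module exactement $\abs{\pi_0}$; une vérification de domination ultramétrique (ces degrés ne sont atteints par aucune partition de module strictement supérieur) donne $\abs{a_{mp^k}}=\abs{\pi_0}$ pour une infinité de $k$, d'où la minoration. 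En réunissant les deux bornes on obtient~\eqref{coeffasymptuniv}; le Corollaire~\ref{Coroestim} s'ensuit par la formule de Cauchy, la seule partie réellement utilisée étant la minoration $\abs{\pi_0}>0$, qui force $\limsup_i\sqrt[i]{\abs{a_i}}=1$ et donc un rayon égal à $1$.
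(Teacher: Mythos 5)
Your reduction to $e(T)=\exp(P(T))$ with $P(T)=\sum_{n=1}^{D}v^{(n)}\pi_{d_n}T^n/n$ via ghost components is correct and matches the paper's setup, the integrality $\abs{a_i}\le 1$ (hence radius $\ge 1$) is fine, and the estimate $\abs{a_i}\le\abs{\pi_d}$ for $p\nmid i$ from the recursion $i\,a_i=\sum_n v^{(n)}\pi_{d_n}a_{i-n}$ is a nice elementary observation absent from the paper. But the Corollary also needs radius $\le 1$, and in your scheme that rests entirely on the minoration $\limsup_i\abs{a_i}\ge\abs{\pi_0}$ --- and that step is broken, in two ways. First, the monomial $cT^m$ of $P$ with $\abs{c}=\abs{\pi_0}$ need not exist: take $p=2$, $D=2$, $(u_1,u_2)=(1,0)$. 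Then $v^{(1)}=v^{(2)}=1$, $d_1=1$, $d_2=0$, so $P(T)=\pi_1T+\pi_0T^2/2$, whose two coefficients have modules $\abs{\pi_1}$ and $\abs{\pi_0}/\abs{2}=1$; neither equals $\abs{\pi_0}$. Your justification conflates conditions that must hold at the \emph{same} index $n$ (a unit ghost component, $d_n=0$, and $p\nmid n$ so that the factor $1/n$ is harmless), and moreover $\sup_i\abs{u_i}=1$ does not even imply that some $v^{(n)}$ is a unit (for $v=(0,1)$, $p=2$, the ghosts are $(0,2)$). Second, and more fundamentally, the "domination ultramétrique" you invoke at the degrees $mp^k$ is destroyed by exactly the Dwork cancellation you yourself describe in your step on $p\mid i$: in the example above, $\limsup_i\abs{a_i}=\abs{\pi_1}$ is attained at the degrees $2^k$ (Proposition~\ref{propannexe}), while the diagonal partition term $\pi_1^{2^k}/(2^k)!$ has module $\abs{2}^{1-2^{k-1}}\to\infty$; individual partition terms are unbounded and only their sum is integral, so no term-by-term inspection of the expansion of $\exp(P(T))$ can yield $\abs{a_{mp^k}}=\abs{\pi_0}$. (Even if no other partition term were strictly larger, you would only get $\abs{a_{mp^k}}\le\abs{\pi_0}$; ultrametric equality requires a strictly dominant term.)

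This is precisely the difficulty the paper's sorite is engineered to avoid: it never expands $\exp(P(T))$ in partitions. Instead it splits $v$ into its $p$-typical layers, writes the series as a product of integral factors of the form $e_{d'}(xT^{mp^i})$, whose coefficient estimates are proved by a Frobenius-reduction induction (Proposition~\ref{propannexe}), and then multiplies out only modulo the fixed ideal $(\pi_d)^{1+\eps}$, where all the dangerous terms vanish (Proposition~\ref{Prop6sorite} and Corollaire~\ref{corosorite}); the minoration is read off the surviving first-order terms. Your remaining step (the strict bound $\limsup_i\abs{a_i}<1$ at indices $p\mid i$ via a Dwork-type congruence) is, as you acknowledge, only a sketch, but that bound is not what the Corollary needs --- the minoration is. As it stands, your argument proves radius $\ge 1$ but not radius $\le 1$.
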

\begin{proof}[Preuve de la Proposition~\ref{Propestim}] Nous faisons une réduction aux sorite détaillé à la section suivante.

Montrons tout d’abord la majoration stricte de~\eqref{coeffasymptuniv}. Tout d’abord la série~\eqref{piexpunivannexe} est congrue à~$1$ modulo~$\pi_d$ avec~$d={\floor{\log_p(D)}}$: cela peut se montrer sur chaque composantes~$p$-typique prise à part et on est alors ramené à la Proposition~\ref{propannexe} (cf.~\eqref{piexpannexe}). Le vecteur de Witt correspondant~$\upsilon_D$ est donc nul modulo~$\pi_{\floor{\log_p(D)}}$. Son produit~$v\widehat\times \upsilon_D$ avec le vecteur de Witt~$\upsilon_D$ entier sera encore nul modulo~$\pi_d$. Donc la série
correspondante sera congrue à~$1$ modulo~$\pi_d$. 
Donc~$\sup\{\abs{a_i}\,|\, i\geq1\}\leq\abs{\pi_d}<1$.

Montrons la minoration. 
Commençons par utiliser la décomposition en composante $p$-typiques~\ref{factorisationtronquee} de~$v\in\W_D(R)$ dans~$\prod_{m\geq 1, p\nmid m} W_{d_m}$, qui  permet de factoriser la série~\eqref{sérieprop}, disons~$f(T)$ en
\begin{equation}\label{unautrelabel}
f(T)=\prod_{m\geq1, p\nmid m} f_m(T^m),
\end{equation}
où les séries~$f_m(T^m)$ sont $p$-typiques.
Chaque~$f_m(T)$ correspond à un vecteur~$v_m\hat{\times}w_{d_m}$ (où $w_{d_m}$ provient du Théorème~\ref{thm1}). Nous avons~$v=\sum V_m(v_m)$ en termes des opérations~$V_m$ (cf. \eqref{factorisationU}).
Par hypothèse~$v$ est entier et sa réduction dans~$\W_D(R/\sqrt{(p)})$ est non nulle. Donc chaque~$v_m$ est entier et l’un au moins est non nul modulo~$\sqrt{(p)}$.

Si~$v_m$ est nul modulo~$\sqrt{(p)}$, alors la majoration ...

\subparagraph*{Cas $p$-typique:}
Commençons par le cas où~$v=V_m(v_m)$ pour un certain~$m$, voire~$v=v_m$ car~$f_m(T)$ et~$f_m(T^m)$ partagent les même coefficients non nuls, bien que leur indices diffèrent. Abrégeons~$w=v_m$ et~$e=d_m$. Écrivons~$w=(x_0,\ldots,x_e)$ un tel vecteur~$p$-typique et décomposons-le
$$(x_0,\ldots,x_e)=(x_0,0,\ldots,0)~\widehat{+}~\ldots~\widehat{+}~(0,\ldots,0,x_e).$$
Par distributivité~$w\widehat{\times}w_e=(x_0,0,\ldots,0)\widehat{\times}w_e~\widehat{+}~\ldots~\widehat{+}~(0,\ldots,0,x_e)\widehat{\times}w_e.$
Notons
\begin{equation}\label{unlabel}
g_1(T)\cdot\ldots\cdot g_e(T)
\end{equation}
la factorisation correspondante. Nous avons~$(0,\ldots,0,x_i,0,\ldots,0)=V^i(x_i,0,\ldots)$ d’où
$$(0,\ldots,0,x_i,0,\ldots,0)\widehat{\times}w_e=V^i\left((x_i,0,\ldots)\right)\widehat{\times}w_e=V^i\left((x_i,0,\ldots)\widehat{\times}F^iw_{e}\right).$$
Or~$F^iw_e=w_{e-i}$. Il suit que~$f_i(T)$ est donnée par~$e_{e-i}(x_i T^{p^i})$  avec la notation précédent la Proposition~\ref{propannexe}. 

Si~$x_i$ est une unité, alors la Proposition~\ref{propannexe} nous apprend que la suite des valeur absolue des coefficients de~$e_{e-i}(x_i T^{p^i})$ a borne supérieure~$\abs{\pi_{e-i}}$.  C’est donc un polynôme modulo~$(\pi_{e-j})$ pour~$i<j\leq e$. Il suit que si~$\abs{x_i}<1$, alors~$e_{e-i}(x_i T^{p^i})$ a rayon de convergence $1/\abs{x_i}>1$ et alors~$e_{e-i}(x_i T^{p^i})$ est un polynôme modulo~$(p)$. Soit~$i_0$ minimal tel que~$\abs{x_{i_0}}=1$. Alors, modulo~$(\pi_{e-i_0})\cdot\sqrt(p)$, les facteurs de~\eqref{unlabel} sont tous des polynômes, mis à part~$g_{i_0}(T)$.

\subparagraph*{Conclusion:} Revenant au cas général~\eqref{unautrelabel}, nous sommes dans le cas du Corollaire~\ref{corosorite} dans le contexte de la Proposition~\ref{Prop6sorite}. La conclusion du Corollaire~\ref{corosorite} nous permet de conclure cette preuve.
\end{proof}

\subsection{Sorite calculatoire}\label{Sorite}
Les deux lemmes suivant résultent de l’estimation classique de la valeur absolue $p$-adique des coefficients de la série~$\exp(T)$. Le premier lemme sert aussi de définition de la notation~\eqref{notationupsilond}.
\begin{lemme}\label{exo1} Pour tout entier~$d\geq0$, le coefficient du terme~$(\pi_0)^{p^d}T^{(p^d)}/p^d!$ de degré degré~$p^d$
de la série~$\exp(\pi_0T)$ est s’écrit~
\begin{equation}\label{notationupsilond}
\frac{{\pi_0}^{p^d} }{p^d!}=\pi_0\cdot \upsilon_d\end{equation}
où~$\upsilon_d$ est une unité dans~$\mathbf{Z}_p$.
\end{lemme}
\begin{lemme}\label{exo2} La série~$e(T):=\exp(\pi_0 T)$ vérifie les congruences
\begin{equation}\label{cong1} e(T)\equiv 1 \pmod{\pi_0},\end{equation}
\begin{equation}\label{cong2} e(T)\equiv 1 + \sum_{d} \pi_0\cdot \upsilon_d T^{(p^d)}\pmod{(\pi_0)^2}.
\end{equation}
\end{lemme}
Ces deux lemmes démontrent le cas~$d=0$ de la proposition ci-dessous.
\begin{prop} \label{propannexe}
Soit~$d\geq 0$ et considérons la série
\begin{equation}\label{piexpannexe}
1+\sum_{i\geq1} a_i T^i=\exp\left(\pi_dT+\pi_{d-1}T^p/p+\ldots+\pi_{0}T^{p^d}/p^d \right).
\end{equation}
Alors~$\abs{a_i}\leq\abs{\pi_d}$ pour tout~$i\geq1$, avec égalité si et seulement si~$i$ est une puissance de~$p$.
\end{prop}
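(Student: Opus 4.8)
Notons~$e_d(T)$ la série~\eqref{piexpannexe}, posons~$G_d(T)=\pi_dT+\pi_{d-1}T^p/p+\cdots+\pi_0T^{p^d}/p^d$ son exposant, et travaillons avec la valuation~$v$ normalisée par~$v(p)=1$, de sorte que~$v(\pi_i)=1/(p^i(p-1))$ d'après~\eqref{rayoni}. Alors~$v(\pi_d)$ est la plus petite des valeurs~$v(\pi_i)$ pour~$0\leq i\leq d$, et l'énoncé équivaut à~$v(a_i)\geq v(\pi_d)$ pour tout~$i\geq1$, avec égalité exactement lorsque~$i$ est une puissance de~$p$. Pour disposer d'un corps résiduel, je me placerais sur l'anneau des entiers~$S$ d'une extension complète algébriquement close~$C$ comme au~§\ref{sectionpreuve}, d'idéal maximal~$\mathfrak{m}$ et de corps résiduel~$k=S/\mathfrak{m}$, de caractéristique~$p$.

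D'abord la majoration. Introduisons l'exponentielle d'Artin--Hasse~$A(T)=\exp\!\left(\sum_{k\geq0}T^{p^k}/p^k\right)$, dont~$A$ et~$A^{-1}$ sont à coefficients dans~$\mathbf{Z}_{(p)}$ (cf.~\cite[VII~§2.2]{Robert}). Comme~$\zeta=\zeta_d$ est d'ordre~$p^{d+1}$, on a~$\zeta^{p^k}=\zeta_{d-k}$ pour~$0\leq k\leq d$ et~$\zeta^{p^k}=1$ dès que~$k>d$, d'où un calcul de logarithmes
$$A(\zeta T)\cdot A(T)^{-1}=\exp\!\left(\sum_{k\geq0}(\zeta^{p^k}-1)\frac{T^{p^k}}{p^k}\right)=\exp\!\left(\sum_{k=0}^{d}\pi_{d-k}\frac{T^{p^k}}{p^k}\right)=e_d(T).$$
Or pour tout~$n$ la racine~$\zeta^n$ est d'ordre~$p^{e}$ avec~$e\leq d+1$, d'où~$v(\zeta^n-1)=v(\pi_{e-1})\geq v(\pi_d)$ lorsque~$\zeta^n\neq1$. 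Les coefficients de~$A$ étant entiers, il suit~$A(\zeta T)\equiv A(T)\pmod{\pi_d}$, puis~$e_d(T)\equiv A(T)A(T)^{-1}=1\pmod{\pi_d}$, ce qui fournit~$v(a_i)\geq v(\pi_d)$.

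Le cas d'égalité se traiterait par récurrence sur~$d$. Le cas~$d=0$ résulte des Lemmes~\ref{exo1} et~\ref{exo2}: le coefficient~$\pi_0^i/i!$ de~$\exp(\pi_0T)$ a valuation~$s_p(i)\,v(\pi_0)$, où~$s_p(i)$ est la somme des chiffres~$p$-adiques de~$i$, d'où l'égalité exactement aux puissances de~$p$. Pour~$d\geq1$, la relation~$G_d(T)=\pi_dT+\frac1p G_{d-1}(T^p)$ fournit l'équation fonctionnelle
$$e_d(T)^p=e_{d-1}(T^p)\cdot\exp(p\pi_dT).$$
Écrivant~$e_d=1+\pi_dh$ et~$e_{d-1}=1+\pi_{d-1}h_{d-1}$ avec~$h$ et~$h_{d-1}$ entières (par la majoration aux crans~$d$ et~$d-1$), je développerais~$(1+\pi_dh)^p$ et comparerais les valuations: le terme~$\pi_d^ph^p$ l'emporte strictement sur les~$\binom{p}{j}\pi_d^jh^j$ ($1\leq j\leq p-1$) et sur la partie non constante de~$\exp(p\pi_dT)$, toutes de valuation~$\geq1+v(\pi_d)$. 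Après division par~$\pi_{d-1}$ et réduction modulo~$\mathfrak{m}$ (licite car~$1+v(\pi_d)-pv(\pi_d)=1-p^{-d}>0$), il resterait, avec~$u=\pi_d^p/\pi_{d-1}$ une unité,
$$\bar u\,\bar h^{\,p}=\bar h_{d-1}(T^p)\qquad\text{dans }k[[T]].$$
Comme~$k$ est de caractéristique~$p$, on a~$\bar h^{\,p}=\sum_i\bar c_i^{\,p}T^{pi}$ si~$\bar h=\sum_i\bar c_iT^i$, et l'identité ci-dessus se lit coefficient par coefficient~$\bar c_i^{\,p}=\bar u^{-1}[\bar h_{d-1}]_i$. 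Le Frobenius étant injectif sur le corps~$k$, on obtient~$\bar c_i\neq0\Leftrightarrow[\bar h_{d-1}]_i\neq0$, et l'hypothèse de récurrence identifie ces~$i$ aux puissances de~$p$. Comme~$\bar c_i=\overline{a_i/\pi_d}$, ceci signifie~$v(a_i)=v(\pi_d)$ exactement aux puissances de~$p$, la majoration assurant l'inégalité stricte ailleurs.

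L'obstacle principal sera le contrôle des valuations dans l'étape de récurrence: il faut garantir que tous les termes d'erreur issus de~$(1+\pi_dh)^p$ et de~$\exp(p\pi_dT)$ deviennent de valuation strictement positive après division par~$\pi_{d-1}$, ce qui repose entièrement sur l'inégalité~$1-(p-1)v(\pi_d)=1-p^{-d}>0$ valable pour~$d\geq1$; c'est elle qui transforme l'équation fonctionnelle, modulo~$\mathfrak{m}$, en la pure relation de Frobenius~$\bar u\,\bar h^{\,p}=\bar h_{d-1}(T^p)$ qui rend la descente possible.
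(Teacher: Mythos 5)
Your argument is correct, but it follows a genuinely different route from the paper's. The paper proves the bound and the equality locus together, by a single induction on~$d$ carried out inside the Witt formalism: it works in the quotient~$R/(\pi_{d-1})^{1+\eps}$ (product of~$(\pi_{d-1})$ by the maximal ideal), where it invokes Bourbaki's result that the Witt Frobenius acts coefficient-wise as~$x\mapsto x^p$ over a ring in which~$p=0$, combines this with the identity~$F(w_d)=w_{d-1}$ and the naturality of the Witt-vector-to-series correspondence to get~$a_i^p\equiv b_i$ index by index, and then transfers valuations using~$(\pi_{d-1})^{1+\eps}=\bigl((\pi_d)^{1+\eps}\bigr)^p$. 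You instead (i) obtain the upper bound~$\abs{a_i}\leq\abs{\pi_d}$, for all~$d$ at once, from the Matsuda factorization~$e_d(T)=A(\zeta T)A(T)^{-1}$ — which is the paper's own formula~\eqref{FormuleMatsuda}, used in the paper only for integrality, whereas you extract from it the sharper congruence~$e_d\equiv 1\pmod{\pi_d}$ — and (ii) treat the equality case by induction via the explicit functional equation~$e_d(T)^p=e_{d-1}(T^p)\exp(p\pi_d T)$, valuation bookkeeping, division by~$\pi_{d-1}$, reduction modulo the maximal ideal, and injectivity of Frobenius on the residue field. Your functional equation is exactly the power-series avatar of the Witt relation~$F(w_d)=w_{d-1}$, so both proofs hinge on the same phenomenon (in characteristic~$p$, level~$d-1$ coefficients are~$p$-th powers of level~$d$ coefficients); but yours is self-contained at the level of series, avoiding both the citation of Bourbaki and the Witt machinery altogether. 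It also makes the crucial numerical inequality explicit, namely~$1-(p-1)v(\pi_d)=1-p^{-d}>0$ for~$d\geq1$; this transparently covers the case~$p=2$, $d=1$, where the paper's assertion that~$p=0$ in~$R/(\pi_0)^{1+\eps}$ is delicate, since there~$v(p)=v(\pi_0)=1$, so~$p$ does not lie in the product of~$(\pi_0)$ with the maximal ideal. The price you pay is a longer explicit computation; the paper's version is shorter once the Witt formalism is granted, and is more in the spirit of the rest of the article.
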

\begin{proof} Nous raisonnons par récurrence sur~$d$. Le cas~$d=0$ a déjà été vu en~\eqref{cong2}. Nous pouvons ici supposer~$R$ métriquement complet, voire que son groupe de valuation est dense. Notons suggestivement~$(\pi_d)^{1+\eps}$ l’idéal produit de l’idéal principal~$(\pi_d)$ par l’idéal maximal de~$R$.

Prenons le cas~$d\geq 1$ et travaillons dans~$R/(\pi_{d-1})^{1+\eps}$. Nous avons~$p=0$ dans~$R/(\pi_{d-1})^{1+\eps}$ vu que~$d\geq1$. D’après~\cite[AC~IX.15, Prop.~5~(51)]{BBKAC8}, le Frobenius (au sens des vecteurs de Witt) agit coefficient par coefficient  sur~$W(R/(\pi_{d-1})^{1+\eps})$ par élévation à la puissance~$p$-ième (le Frobenius de~$R/(\pi_{d-1})^{1+\eps}$). Or nous avons
\begin{equation*} F(w_{d})=w_{d-1}.\end{equation*}
Par hypothèse de récurrence, la série~$\sum_{i\geq0} b_i T^i$ associée à~$w_{d-1}$ a des coefficients~$b_i$ tels que:~$\abs{b_i}\leq\abs{\pi_d}$ pour tout~$i\geq1$, avec égalité si et seulement si~$i$ est une puissance de~$p$.

La construction de la série associée à un vecteur de Witt sur~$R/(\pi_{d-1})^{1+\eps}$ est une opération algébrique sur~$R/(\pi_{d-1})^{1+\eps}$: c’est manifeste en termes des coordonnéées de Witt universelles
$$ (w_n)_{n\geq1}\mapsto \sum_{i\geq0}c_iT^i=\prod_{n\geq0}(1-w_nT^n).$$
Cette opération est équivariante pour l’endomorphisme~$x\mapsto x^p$, au sens de l’application:~$({w_n}^p)_{n\geq1}\pmod{p}\mapsto\sum_{i\geq0}{c_i}^pT^i\pmod{p}$. Comme~$p=0$ dans~$R/(\pi_{d-1})^{1+\eps}$, nous l’identité~$F(w_{d})=w_{d-1}$ donne~${a_i}^p=b_i$ dans~$R/(\pi_{d-1})^{1+\eps}$, indice par indice. Rappelons que nous avons la congruence~${\pi_d}^p\equiv \pi_{d-1}\pmod{p}$, d’où l’identité d’idéaux~$(\pi_{d-1})^{1+\eps}={((\pi_{d})^{1+\eps})}^p$. Donc l’inégalité~$\abs{a_i}\leq \abs{\pi_d}$ (resp.~$\abs{b_i}<\abs{\pi_{d-1}}$) équivaut à~$\abs{a_i}\leq \abs{\pi_d}$ (resp.~$\abs{b_i}<\abs{\pi_{d-1}}$).

\end{proof}
Remarquons que cet énoncé montre en particulier la congruence
\begin{equation}\label{piexpannexe}
\exp\left(\pi_dT+\pi_{d-1}T^p/p+\ldots+\pi_{0}T^{p^d}/p^d \right)\equiv 1 \pmod{\pi_d}
\end{equation}
et plus particulièrement encore l’intégralité de cette série.

Pour tout entier~$d\geq0$, notons~$e_d$ la série de~\eqref{piexpannexe}. Généralisons les estimations précédentes au cas de produits des séries étudiées.
\begin{prop}\label{Prop6sorite}
 Soient~$\left(u_m\right)_{m\geq1,\,p\nmid m}$ une famille dans~$R$ et~$\left(d_m\right)_{m\geq1,\,p\nmid m}$ une famille d’entiers, de borne supérieure~$d$. Alors la série
$$1+\sum_{i\geq1} b_i T^i=\prod_{m\geq 1,\,p\nmid m}e_{d_m}(u_m T^{m})$$
est telle que~$\abs{b_i}\leq\abs{\pi_d}$ pour tout~$i\geq1$, avec égalité si et seulement si~$i$ est de la forme~$mp^n$ avec~$p\nmid m$, avec~$\abs{u_m}=1$ et avec~$d_m=d$.
\end{prop}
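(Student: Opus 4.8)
The plan is to reduce the infinite product modulo~$(\pi_d)^2$, so that only the \emph{linear} contribution of each factor survives, and then to read off~$b_i$ from the single factor indexed by the prime-to-$p$ part of~$i$, invoking Proposition~\ref{propannexe} for that factor. First I would record the behaviour of a single factor: writing~$e_{d_m}(S)=1+\sum_{j\geq1}a^{(m)}_j S^j$, Proposition~\ref{propannexe} gives~$\abs{a^{(m)}_j}\leq\abs{\pi_{d_m}}$ with equality exactly when~$j$ is a power of~$p$. Since~$i\mapsto\abs{\pi_i}=\abs{p}^{1/(p^i(p-1))}$ is \emph{strictly} increasing (cf.~\eqref{rayoni}), the inequality~$d_m\leq d$ yields~$\abs{\pi_{d_m}}\leq\abs{\pi_d}$; together with~$\abs{u_m}\leq1$ this shows that every nonconstant coefficient of~$e_{d_m}(u_m T^m)$ lies in the ideal~$(\pi_d)$, so each factor is~$\equiv1\pmod{(\pi_d)}$.

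Next I would write each factor as~$1+g_m$ with~$g_m\in(\pi_d)R[[T]]$. Any product~$g_m g_{m'}\cdots$ of two or more distinct~$g$'s then lands in~$(\pi_d)^2R[[T]]$. Working degree by degree—for a fixed~$i$ only the finitely many factors with~$m\leq i$ contribute, and each~$g_m$ only in degrees that are multiples of~$m$—this legitimises the truncation and gives
$$b_i\equiv\sum_{m\mid i,\ p\nmid m}a^{(m)}_{i/m}\,u_m^{\,i/m}\pmod{(\pi_d)^2}.$$
Each summand has absolute value~$\abs{a^{(m)}_{i/m}}\abs{u_m}^{i/m}\leq\abs{\pi_{d_m}}\leq\abs{\pi_d}$, while the error term lies in~$(\pi_d)^2$ and so has absolute value~$\leq\abs{\pi_d}^2<\abs{\pi_d}$. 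The ultrametric inequality then yields~$\abs{b_i}\leq\abs{\pi_d}$, which is the asserted bound.

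For the equality statement I would exploit the uniqueness of the decomposition~$i=m'p^{n'}$ with~$p\nmid m'$: among the indices occurring in the sum above, the quotient~$i/m$ is a power of~$p$ \emph{only} for~$m=m'$, since~$i/m=p^k$ forces~$m=m'$ and~$k=n'$. By the equality clause of Proposition~\ref{propannexe}, every term with~$m\neq m'$ therefore has~$\abs{a^{(m)}_{i/m}}<\abs{\pi_{d_m}}\leq\abs{\pi_d}$, hence absolute value strictly below~$\abs{\pi_d}$. Thus there is a single possibly-dominant term~$t_{m'}=a^{(m')}_{p^{n'}}u_{m'}^{\,p^{n'}}$, and no cancellation at the level~$\abs{\pi_d}$ can occur. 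Consequently~$\abs{b_i}=\abs{\pi_d}$ if and only if~$\abs{t_{m'}}=\abs{\pi_d}$, which—using~$\abs{u_{m'}}\leq1$, the strict monotonicity of~$\abs{\pi_\bullet}$, and that~$i/m'=p^{n'}$ is automatically a power of~$p$—amounts to~$\abs{u_{m'}}=1$ \emph{and}~$d_{m'}=d$; since~$i=m'p^{n'}$ this is precisely the stated condition.

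The main obstacle I anticipate is not conceptual but organisational: justifying the~$(\pi_d)^2$-truncation for an \emph{infinite} product (handled degree by degree, each~$b_i$ depending on finitely many factors) and, above all, ruling out cancellation among the leading terms. The latter is exactly where the uniqueness of the prime-to-$p$ part of~$i$ does the essential work, reducing the whole computation to the one factor~$e_{d_{m'}}(u_{m'}T^{m'})$ controlled by Proposition~\ref{propannexe}.
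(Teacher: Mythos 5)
Your proposal is correct and follows essentially the same route as the paper: reduce the product modulo~$(\pi_d)^2$ so that only the linear contribution of each factor~$e_{d_m}(u_mT^m)$ survives, control each factor by Proposition~\ref{propannexe}, and use the uniqueness of the decomposition~$i=m'p^{n'}$ with~$p\nmid m'$ to isolate the single possibly-dominant term and rule out cancellation. If anything, your degree-by-degree bookkeeping with the~$g_m$'s and explicit ultrametric estimates is slightly more careful than the paper's shorthand (which absorbs all sub-dominant coefficients into the congruence modulo~$(\pi_d)^2$ via the unit symbols~$\ast_{n,m}$), but the underlying argument is identical.
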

\begin{proof}On se place dans~$R/(\pi_d)^2$. Réécrivons le produit grâce à la Proposition~\ref{propannexe},
 en utilisant le symbole~$\ast_{n,m}$ pour désigner des unités de~$R$ que l’on ne souhaite pas expliciter,
$$\prod_{i\geq 0,\,p\nmid m}e_{d_m}(u_m T^{m})
\equiv
\prod_{i\geq 0,\,p\nmid m}\left(1+\pi_{d_m}\sum_{n\geq0} \ast_{n,m} {u_mT^m}^{p^n}\right)\pmod{(\pi_{d})^2}.$$
Il suffit de développer le produit de droite. Les termes faisant apparaître au moins deux fois un facteur~$\pi_d$, ou une fois un facteur~$\pi_{d^\prime}$ avec~$d\leq d^\prime$, sont congrus à~$0$ modulo~$(\pi_d)^2$. Il ne reste que
$$1+\sum_{i\geq1} b_i T^i=1+\sum_{m\geq 0,~p\nmid m, d_m=d}\pi_d\sum_{n\geq0} \ast_{n,m} \left(u_mT^m\right)^{p^n} \pmod{(\pi_d)^2}.$$
Il s’ensuit une formule directe pour les coefficients~$b_i$ modulo~$(\pi_d)^2$. L’énoncé en découle immédiatement.
\end{proof}
\begin{coro}\label{corosorite}
 Soit~$P(T)$ un polynôme dans~$R[T]$ et soit~$\eps>0$ tel que~$P\equiv 1\pmod{(\pi_d)^\eps}$. Alors la série
$$1+\sum_{i\geq1} c_i T^i=P(T)\cdot\left(1+\sum_{i\geq1} b_i T^i\right)$$
est telle que~$\abs{c_i}\leq\abs{\pi_d}$ pour tout~$i>\deg(P)$, avec égalité, pour de tels indices~$i$, si et seulement si~$i$ est de la forme~$mp^n$ avec~$p\nmid m$, avec~$\abs{u_m}=1$ et avec~$d_m=d$.
\end{coro}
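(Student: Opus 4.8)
The plan is to read off the coefficients $c_i$ from the convolution product and to control them through the ultrametric inequality, the hypothesis on $P$ entering only to make the constant term a unit and the remaining coefficients small. Writing $P(T)=\sum_{j=0}^{\deg P} p_j T^j$, the congruence $P\equiv 1\pmod{(\pi_d)^\eps}$ says that $\abs{p_0-1}<1$, so that $p_0$ is a unit with $\abs{p_0}=1$, and that $\abs{p_j}<1$ for every $j\geq1$. Setting $b_0=1$, the coefficients of the product are
\[
c_i=\sum_{j=0}^{\deg P} p_j\,b_{i-j}=p_0\,b_i+\sum_{j=1}^{\deg P} p_j\,b_{i-j}.
\]

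First I would isolate the diagonal term $p_0 b_i$. Since $p_0$ is a unit, $\abs{p_0 b_i}=\abs{b_i}$, and Proposition~\ref{Prop6sorite} gives $\abs{b_i}\leq\abs{\pi_d}$. The decisive use of the hypothesis $i>\deg P$ comes next: for each off-diagonal index $1\leq j\leq\deg P$ one has $i-j\geq i-\deg P\geq1$, so $b_{i-j}$ is a non-constant coefficient and again $\abs{b_{i-j}}\leq\abs{\pi_d}$ by Proposition~\ref{Prop6sorite}. Combined with $\abs{p_j}<1$ this yields the strict bound $\abs{p_j b_{i-j}}<\abs{\pi_d}$, whence the whole off-diagonal sum has absolute value $<\abs{\pi_d}$. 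I expect this to be the only genuinely load-bearing step: the restriction $i>\deg P$ is exactly what keeps every factor $b_{i-j}$ away from the constant coefficient $b_0=1$, which $p_j$ would control only through $\abs{p_j}$ and not through $\abs{\pi_d}$.

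It then remains to combine these estimates by the ultrametric inequality. If $\abs{b_i}=\abs{\pi_d}$, the diagonal term strictly dominates the off-diagonal sum, so $\abs{c_i}=\abs{p_0 b_i}=\abs{\pi_d}$; if instead $\abs{b_i}<\abs{\pi_d}$, then both $\abs{p_0 b_i}=\abs{b_i}$ and the off-diagonal sum are $<\abs{\pi_d}$, giving $\abs{c_i}<\abs{\pi_d}$. In either case $\abs{c_i}\leq\abs{\pi_d}$ for $i>\deg P$, with equality precisely when $\abs{b_i}=\abs{\pi_d}$. Invoking Proposition~\ref{Prop6sorite} once more, the latter holds if and only if $i=mp^n$ with $p\nmid m$, $\abs{u_m}=1$ and $d_m=d$, which is exactly the asserted characterization and concludes the argument.
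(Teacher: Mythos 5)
Your proof is correct, and it reaches the conclusion by a somewhat different route than the paper. The paper's own proof re-enters the computation of Proposition~\ref{Prop6sorite}: it expands the product and establishes, exactly as in that proof, the congruence
$$1+\sum_{i\geq1} c_i T^i\equiv P(T)+\sum_{m\geq 1,~p\nmid m,~d_m=d}\pi_d\sum_{n\geq0} \ast_{n,m} \left(u_mT^m\right)^{p^n} \pmod{(\pi_d)^{1+\eps}},$$
the cross terms being negligible because the coefficients of~$P-1$ lie in~$(\pi_d)^{\eps}$ while the series minus~$1$ has coefficients in~$(\pi_d)$; for~$i>\deg P$ the coefficient of~$T^i$ in~$P$ vanishes, and both the bound and the equality case are read off from the right-hand side. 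You instead treat Proposition~\ref{Prop6sorite} as a black box: you use only its stated bounds~$\abs{b_{i-j}}\leq\abs{\pi_d}$ (applicable because~$i-j\geq1$ whenever~$i>\deg P$ and~$j\leq\deg P$ --- you correctly identify this as the load-bearing use of the hypothesis), the facts~$\abs{p_0}=1$ and~$\abs{p_j}<1$ for~$j\geq1$ extracted from~$P\equiv1\pmod{(\pi_d)^\eps}$, and the ultrametric inequality on the finite convolution, the dominant-term principle settling the equality case via~$\abs{c_i}=\abs{\pi_d}\Leftrightarrow\abs{b_i}=\abs{\pi_d}$. Both arguments are sound. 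Yours is more modular and slightly more elementary: it would apply verbatim to any series whose coefficients satisfy the conclusion of Proposition~\ref{Prop6sorite}, and it requires no congruence bookkeeping. The paper's version yields finer information --- it identifies~$c_i$ modulo~$(\pi_d)^{1+\eps}$, exhibiting its leading part as a unit times~$\pi_d u_m^{p^n}$ --- which stays in the explicitly computational spirit of the sorite and is the kind of refinement the appendix advertises for algorithmic use.
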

\begin{proof}On développe le produit et montre comme précédemment que
$$1+\sum_{i\geq1} c_i T^i=P(T)+\sum_{m\geq 0,~p\nmid m, d_m=d}\pi_d\sum_{n\geq0} \ast_{n,m} \left(u_mT^m\right)^{p^n} \pmod{(\pi_d)^{1+\eps}}.$$
\end{proof}
\begin{remark}[Application du Corollaire] Les estimations que nous venons d’établir peuvent permettre de donner un nouvel algorithme « naïf » de calcul de rayon de convergence. Soit~$K$ est une extension de~$\QQ_p$ d’indice de ramification~$e$ fini, et~$P(T)$ un polynôme de~$K[T]$ de degré~$D$ tel que~$P(0)=0$. Alors il est possible de donner, en termes de~$D$ et de~$e$ un ensemble fini~$F(e,D)$ d’indices tel que pour tout polynôme de degré au plus~$D$ de~$K[T]$ tel que~$P(0)=0$, le rayon de convergence de~$\exp(P(T))=\sum_{i\geq0}a_iT^i$ est déterminé directement par la famille des valeurs absolues~$(\abs{a_f})_{f\in F(e,D)}$.
\end{remark}

\section{Sur les paramètres de Pulita et les groupes de Lubin-Tate}
Jusqu’à présent, nous nous sommes placé dans un cadre un peu plus restreint que celui de~\cite{Pulita}. La théorie de Pulita permet de considérer des paramètres~$\pi_i$ obtenus en fixant une loi de groupe de Lubin-Tate isomorphe à la loi multiplicative (\cite[§2.1 (2.1) p.509, Définition 2.2 et Remarque~2.3 p.510]{Pulita}, cf.~\ref{parametresPulita}~\emph{infra}). Du point de vue de~\cite{Pulita}, nous nous sommes restreint au cas
\begin{equation}\label{casmult}\pi_i=\zeta_i-1\end{equation} (cf. le début de la section~\ref{sectionpreuve} et celui de l’annexe~\ref{exemples}), ce qui correspond à la loi multiplicative
\begin{equation}\label{loimult} F_m(X,Y)=(X+1)(Y+1)-1=XY+X+Y\end{equation}
et au polynôme de Lubin-Tate~$P_m(X)=(X+1)^p-1$.

Dans cet annexe, nous montrons comment généraliser tous nos résultats dans le contexte de~\cite{Pulita}. Commençons par rappeler ce contexte.
\subsection{Paramètres de Pulita}\label{parametresPulita} Suivant~\cite{Pulita}, soit~$F(X,Y)$ une loi de groupe formel de Lubin-Tate définie sur~$\QQ_p$, supposée isomorphe à la loi multiplicative~\eqref{loimult}. Il existe un polynôme~$P(T)$ de la forme~$pT\pmod{T^2}$ et~$T^p\pmod{p}$ qui définit la multiplication par~$p$ au sens de la loi~$F$.

Nous choisissons une suite~$(\pi^F_i)_{i\geq0}$ telle que~$P(\pi^F_{i+1})=\pi^F_i$, que~$P(\pi^F_0)=0$ et~$\pi_0\neq0$.

Par hypothèse il existe un isomorphisme de~$F_m$ vers~$F$. Il s’agit d’une série~$\varphi(X)\in T\QQ_p[[T]]$ telle que~$\varphi(F_m(X,Y))=F(\varphi(X),\varphi(Y))$. Si~$\exp_F(T)$ est la série exponentielle de la loi~$F$, nous avons~$\varphi=\exp_F\circ\log$. Notons que la série~$\varphi$ a nécessairement tous ses coefficients dans~$\mathbf{Z}
_p$.

Notons~$\psi$ l’isomorphisme réciproque de~$F$ vers~$F_m$. Nous avons~$\psi=\stackrel{-1}{\varphi}=\exp\circ\log_F$ où~$\log_F$ est le logarithme\footnote{La série~$\log_F(T)$ est donnée explicitement par~$\log_F(T)=\lim_{n\to\infty}\left({P_F}^{\circ n}(T)/p^n)\right)$, où l’exposant~$\circ n$ dénote~$n$ compositions successives:~${P_F}^{\circ n}(t):=P_F(P_F(\cdots P_F(T)\cdots))$.}
 de la loi~$F$.
 
 Alors les nombres~${\pi^m_i}:=\psi(\pi^F_i)$, pour~$i\geq0$, sont tels que~$\zeta_i:=1+{\pi^m_i}$ est une racine de l’unité d’ordre~$p^{1+i}$. La suite des~$\zeta_i$ vérifie également la propriété de compatibilité~$\left(\zeta_{i+1}\right)^p=\zeta_i$. Autrement dit, en posant~$\pi_i=\pi^m_i$, on se trouve dans le cas~\eqref{casmult}.

Le lemme suivant donne des approximations polynomiales de la série~$\psi$.
\begin{lemme}\label{lm} Soit~$\epsilon>0$. Alors il existe un polynôme~$\Phi(X)\in\mathbf{Z}[X]$, de terme constant non nul, tel que
\begin{equation}\label{eqnlm} \forall0\leq i\leq d, \abs{\Phi(\pi^m_i)-\pi^F_i}\leq\epsilon.\end{equation}
\end{lemme}
\begin{proof} Cherchons tout d’abord~$\Phi$ à coefficients dans~$\mathbf{Z}_p$. Pour cela considérons, pour tout entier~$M$, la troncation~$\phi^{[M]}$ de la série~$\phi$ jusqu’à l’ordre~$M$ exclu. Alors la queue~$\phi-\phi^{[M]}$ de la série est dans~$X^M\mathbf{Z}_p[X]$. Appliquons en l’entier ultramétrique~$\pi^M_i$, ce qui donne la majoration
$$\phi(\pi^m_i)-\phi^{[M]}(\pi^m_i)\leq \abs{\pi^m_i}^M=\abs{p}^{M/(p^i(p-1))}.$$
Nous retrouvons~\eqref{eqnlm} dès que~$M$ est suffisamment grand.
Fixons un tel~$M$.

Pour obtenir~$\Phi$ à coefficients entiers, il suffit de le construire en approchant chaque coefficient de~$\Phi^{[M]}$ à la précision~$\eps/2$ par un entier.

\end{proof}

\subsection{}
La méthode que nous avons employée et nos démonstrations s’adaptent sans changements, à l’exception ci-dessous près, en remplaçant~$\zeta_i-1$ par~$\pi_i$.

\begin{prop}\label{propintegralitePulita} Soit~$d\geq0$. La série
\begin{equation}\label{integralitePulita}\exp\left(\pi^F_dT+\pi^F_{d-1}T^p/p+\ldots+\pi^F_0T^{p^d}/p^d\right)\end{equation}
est à coefficients dans~$\mathbf{Z}_{(p)}[\pi^F_d,\pi^F_{d-1},\ldots,\pi^F_0]$.
\end{prop}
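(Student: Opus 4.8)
The plan is to reduce the claim to the multiplicative case already settled in Proposition~\ref{propannexe}, and to transport integrality along the formal isomorphism $\varphi$ by means of the approximation Lemme~\ref{lm}. First I would read the series~\eqref{integralitePulita} through the dictionary of~\ref{integralite} and~\ref{fantome}: it is the Artin--Hasse image $AH(\omega^F)$ of the $p$-typical Witt vector $\omega^F$ whose phantom (ghost) components are, up to the sign convention of~\ref{fantome}, the sequence $(\pi^F_d,\pi^F_{d-1},\ldots,\pi^F_0,0,0,\ldots)$. By~\ref{integralite}, showing that~\eqref{integralitePulita} has coefficients in $\mathbf{Z}_{(p)}[\pi^F_d,\ldots,\pi^F_0]$ amounts to showing that $\omega^F$ is integral. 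The multiplicative analogue $\omega$, with phantom components $(\pi^m_d,\ldots,\pi^m_0,0,\ldots)$, is integral by Proposition~\ref{propannexe} (indeed its series has coefficients in $\mathbf{Z}_{(p)}[\zeta_d]$, cf.~\ref{numeromatsuda}); and the two are linked by $\pi^F_i=\varphi(\pi^m_i)$ with $\varphi\in T\mathbf{Z}_p[[T]]$, so that the phantom vector of $\omega^F$ is obtained from that of $\omega$ by applying the \emph{single} power series $\varphi$ (or, to absorb the sign convention, the series $x\mapsto-\varphi(-x)$, still in $T\mathbf{Z}_p[[T]]$) coordinate by coordinate.

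The crux is the following Witt-theoretic observation. Since the ghost map is a ring homomorphism into the product ring, for any integer polynomial $\Phi\in X\mathbf{Z}[X]$ the value $\Phi(\omega)$ of $\Phi$ at the \emph{ring element} $\omega\in W(\mathbf{Z}_{(p)}[\zeta_d])$ is again an integral Witt vector, whose phantom components are exactly $(\Phi(\pi^m_{d-i}))_{i\geq0}$; here $\Phi(0)=0$ takes care of the vanishing tail. Concretely, $AH(\Phi(\omega))$ is an integral series because $\Phi(\omega)$ is built from $\omega$ by ring operations and $W(\mathbf{Z}_{(p)}[\zeta_d])$ is a ring. Now I would invoke Lemme~\ref{lm}: for each $\epsilon>0$ it furnishes such a $\Phi$ with $\Phi(\pi^m_i)$ within $\epsilon$ of $\pi^F_i$ for $0\leq i\leq d$. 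Thus the integral Witt vectors $\Phi(\omega)$ have phantom components converging $p$-adically, coordinate by coordinate, to those of $\omega^F$; equivalently, the integral series $AH(\Phi(\omega))$ converge coefficientwise to~\eqref{integralitePulita}.

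It then remains to pass to the limit. Because $p$-integrality (non-negative valuation) is a closed condition, every coefficient of~\eqref{integralitePulita}, being a $p$-adic limit of coefficients of the integral series $AH(\Phi(\omega))$, is itself $p$-integral; equivalently $\omega^F$ lies in $W(\mathbf{Z}_p[\zeta_d])$. Since these coefficients visibly belong to $\mathbf{Q}(\pi^F_0,\ldots,\pi^F_d)$, and $\mathbf{Z}_{(p)}[\pi^F_d]$ is the ring of $p$-integers there — the $\pi^F_i$ generating a totally ramified Eisenstein Lubin--Tate extension in which $\pi^F_d$ already generates all the lower $\pi^F_i$ — they lie in $\mathbf{Z}_{(p)}[\pi^F_d,\ldots,\pi^F_0]$, as claimed.

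I expect the genuine obstacle to be this last limiting step rather than the algebra: one must make precise that the inverse of the ghost map is $p$-adically continuous on the relevant domain, so that convergence of the phantom coordinates yields convergence of the Witt coordinates (hence of the series coefficients), and one must pin down the exact coefficient ring at the end. It is worth recording why no shortcut is available here: a direct Dwork-type criterion would require a Frobenius lift $\sigma$ on $\mathbf{Z}_p[\pi^F_d]$ with $\sigma(\pi^F_{i+1})=\pi^F_i$, which cannot exist, since the predecessor $\pi^F_i$ generates a strictly smaller field and a field endomorphism of a finite extension of $\mathbf{Q}_p$ must be an automorphism. This failure is precisely what the approximation of Lemme~\ref{lm} circumvents, and it explains why the multiplicative integrality (proved there by Matsuda's factorisation, which is unavailable for a general $F$) must be imported from the case~$F=F_m$ rather than re-established intrinsically.
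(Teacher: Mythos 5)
Your proposal is correct, and its first two thirds coincide with the paper's own proof: both read the series as the (Artin--Hasse image of the) Witt vector~$\omega^F$ with phantom components~$(\pi^F_d,\ldots,\pi^F_0,0,\ldots)$, both use Lemme~\ref{lm} to produce an integer polynomial~$\Phi$ without constant term, and both exploit the key ring-theoretic fact that~$\Phi(\omega)$ (equivalently~$\Phi(AH(v_d))$ computed in~$\Lambda(\mathbf{Z}_p[\pi^m_d])$) is an \emph{integral} Witt vector whose phantom components are~$(\Phi(\pi^m_d),\ldots,\Phi(\pi^m_0),0,\ldots)$. Where you diverge is the concluding mechanism. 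The paper fixes \emph{one} approximation~$\Phi$, good enough that the error phantom vector~$(\pi^F_i-\Phi(\pi^m_i))_i$ has Gauss norm below the radius of convergence of~$\exp$; the quotient series~$\exp\left(\phi_0T+\cdots+\phi_dT^{p^d}/p^d\right)$ then has radius of convergence at least~$1$, hence is integral by the implication~$(1)\Rightarrow(2)$ of the Théorème~\ref{thm2}, and the target series is a product of two integral series. You instead let~$\epsilon\to0$ along a sequence of~$\Phi$'s and conclude by a topological argument: for each fixed degree~$n$ the coefficient of~$T^n$ is a fixed polynomial with rational coefficients in the~$d+1$ phantom components, hence depends continuously on them, so the coefficients of the integral approximants converge to those of~\eqref{integralitePulita}, and integrality is a closed condition. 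Both arguments are valid; yours has the merit of bypassing the Théorème~\ref{thm2} entirely (making the appendix self-contained and purely algebro-topological, which also answers your own worry about the limiting step — the needed continuity is just that of a polynomial map once one restricts to vectors whose phantom tail vanishes beyond index~$d$), whereas the paper's single-approximation factorisation is constructive, exhibiting the series explicitly as a product of two integral series, which is what lets the author claim that the integrality can be made effective.
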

Dans le cas~\eqref{casmult}, cette proposition a été obtenue au numéro~\ref{numeromatsuda}, dans lequel nous nous sommes référé à l’argument immédiat de~\cite[Lemme~1.5]{Matsuda}, qui se base sur les propriétés d’intégralité de l’exponentielle d’Artin-Hasse.

Dans son contexte plus général,~\cite{Pulita} utilise un autre type d’argument, l’«~astuce~»~\cite[§2.1, cf. Lemma~2.1, Remark~2.1]{Pulita}. Son résultat~\cite[Lemme~2.1, comme dans Remarque~2.3]{Pulita} est même plus général que notre énoncé. Nous nous proposons dans cette annexe de retrouver la propriété d’intégralité de~\eqref{integralitePulita} à partir de celle du cas~\eqref{casmult} déjà couvert.

D’une part cela permet d’étendre les résultats de ce document au contexte plus général de~\cite{Pulita}. En outre, notre démonstration est constructive et permet de rendre explicite
 l’intégralité cherchée.

\begin{proof}[Démonstration de la Proposition~\ref{propintegralitePulita}]
La série~\eqref{integralitePulita} a manifestement tous ses coefficients dans~$\QQ(\pi^F_d,\pi^F_{d-1},\ldots,\pi^F_0)$. En outre
$$\QQ(\pi^F_d,\pi^F_{d-1},\ldots,\pi^F_0)\cap\mathbf{Z}_p[\pi^F_d,\pi^F_{d-1},\ldots,\pi^F_0]=\mathbf{Z}_{(p)}[\pi^F_d,\pi^F_{d-1},\ldots,\pi^F_0].$$
Il suffit donc de montrer que les coefficients de~\eqref{integralitePulita} sont dans~$\mathbf{Z}_p[\pi^F_d,\pi^F_{d-1},\ldots,\pi^F_0]$.

Notons que~$\mathbf{Z}_p[\pi^F_d,\pi^F_{d-1},\ldots,\pi^F_0]$ est l’anneau d’entier de l’extension ramifiée associée au groupe de Lubin-Tate~$F$. Cet anneau s’écrit encore~$\mathbf{Z}_p[\pi^F_d]$, et même~$\mathbf{Z}_p[\pi^m_d]$ car il ne dépends de la loi~$F$ qu’à isomorphisme près.

tout revient à montrer que la série~\eqref{integralitePulita} appartient à~$\Lambda(\mathbf{Z}_p[\pi_d^m])$. D’après ce qui précède, cet série appartient à~~$\Lambda(\mathbf{Q}_p[\pi_d^m])$. Ses composantes fantômes s’écrivent
$$(\pi^F_d,\ldots,\pi^F_0,0,0,\ldots).$$
D’après le Lemme~\ref{lm}, il existe un polynôme à coefficients entiers et terme constant nul~$\Phi$ tel que
\begin{equation}\label{condition}
\forall 0\leq i\leq d, \abs{\Phi(\pi^m_i)-\pi^F_i}\leq \abs{p}^*.\end{equation}
Soit~$\Phi(AH(v_d))$ l’évaluation de~$\Phi$ dans l’anneau~$\Lambda(\mathbf{Z}_p[\pi_d^m])$, appliqué à la série~$AH(v_d)$. Alors~$\Phi(AH(v_d))$ est un élément de~$\Lambda(\mathbf{Z}_p[\pi_d^m])$. En outre, la série quotient
$$
\frac{\exp\left(\pi^F_dT+\pi^F_{d-1}T^p/p+\ldots+\pi^F_0T^{p^d}/p^d\right)}
{\Phi(AH(v_d))}
$$
a composantes fantômes
\begin{equation}\label{eqn}
(\phi_0,\phi_1,\ldots):=(\pi^F_d,\ldots,\pi^F_0,0,0,\ldots)-(\Phi(\pi^m_d),\ldots,\Phi(\pi^m_0),\Phi(0),\Phi(0),\ldots) \end{equation}
Comme~$\Phi$ est sans terme constant, on obtient la formule~$\forall i>d, \phi_i=0)$.

La série correspondant à~\eqref{eqn} est
\begin{equation}\label{expo}
\exp(\phi_0T+\phi_1T^p/p+\ldots+\phi_dT^{p^d}/p^d).\end{equation}
D’après~\eqref{condition}, l’argument de l’exponentielle dans~\eqref{expo}  est borné, en norme de Gauß, par le rayon de convergence de l’exponentielle.  La série de rayon~\eqref{expo} a donc rayon de convergence au moins~$1$. Elle est donc à coefficients entiers (Théorème~\ref{thm2}). Finalement
$$\exp\left(\pi^F_dT+\pi^F_{d-1}T^p/p+\ldots+\pi^F_0T^{p^d}/p^d\right)
=\Phi(AH(v_d))\cdot\exp(\phi_0T+\phi_1T^p/p+\ldots+\phi_dT^{p^d}/p^d)$$
est produit de série à coefficients entiers, donc est elle-même à coefficients entiers.
\end{proof}

\frenchspacing
\bibliographystyle{alpha}
\bibliography{piexp1ArXiv}





\end{document}